\documentclass{amsart}
\usepackage{amssymb,url,xspace, amsthm}
\usepackage{bigints}
\usepackage{multirow}
\usepackage[all]{xy}
\usepackage{hyperref}

\newtheorem{theorem}{Theorem}[section]
\newtheorem{lemma}[theorem]{Lemma}
\newtheorem{props}[theorem]{Proposition}

\theoremstyle{definition}
\newtheorem{definition}[theorem]{Definition}

%%%%%%%%%%%%%%% OTHER 'THEOREMS' %%%%%%%%%%%%%%%%%
\newtheorem*{question}{Question}
\newtheorem*{theoremtype}{Theorem 2.14}
\newtheorem*{theoremcotype}{Theorem 3.4}
\newtheorem*{rmk}{Remark}

\numberwithin{equation}{section}

%%%%%%%%%%%%%%%%% ESSSUP $$$$$$$$$$$$$$$$$
\DeclareMathOperator*{\esssup}{ess\,sup}

%%%%%%%%%%%%%%       TRIPLE NORM    %%%%%%%%%%%%%%
\newcommand{\vertiii}[1]{{\left\vert\kern-0.25ex\left\vert\kern-0.25ex\left\vert #1 
    \right\vert\kern-0.25ex\right\vert\kern-0.25ex\right\vert}}
%%%%%%%%%%%%%%%%%%%%%%%%%%%%%%%%%%%%%%%%%%%%%%%%%%%%%%%%%%

%%%%%%%%%%%%%%%%%%%%%%     QUOCIENTE       %%%%%%%%%%%%%%%%%%%%

%%%%%%%%%%%%%%%%%%%%%%%%%%%%%%%%%%%%%%%%%%%%%%%%%%%%%%%%%%%%%%%%

%%%%%%%%%%%%%%%%%%%%%%  BLOCK INSIDE MATRIX %%%%%%%%%%%%%%%%%%%%
\def\block(#1,#2)#3{\multicolumn{#2}{c}{\multirow{#1}{*}{$ #3 $}}}
%%%%%%%%%%%%%%%%%%%%%%%%%%%%%%%%%%%%%%%%%%%%%%%%%%%%%%%%%%%%%%%

%%%%%%%%%%%%%%%%%%%%%%%% IMAGE %%%%%%%%%%%%%%%%%%%%%%%%%%%%

%%%%%%%%%%%%%%%%%%%%%%%%%%%%%%%%%%%%%%%%%%%%%%%%%%%%%%%

%    Absolute value notation

\begin{document}

\title{Type, cotype and twisted sums induced by complex interpolation}

%    author one information
% \author[short version for running head]{name for top of paper}
\author{Willian Hans Goes Corr\^ea}
\address{Departamento de Matem\'atica, Instituto de Matem\'atica e Estat\'istica, Universidade de S\~ao Paulo, Rua do Mat\~ao 1010, 05508-090 S\~ao Paulo SP, Brazil}
\thanks{The present work was produced with support from CNPq, National Council for Scientific and
Technological Development - Brazil, grant 140413/2016-2,
and CAPES, Coordena\c{c}\~ao de Aperfei\c{c}oamento de Pessoal de N\'ivel Superior, grant 1328372}

\date{}

\begin{abstract}
This paper deals with extensions or twisted sums of Banach spaces that come induced by complex interpolation and the relation between the type and cotype of the spaces in the interpolation scale and the nontriviality and singularity of the induced extension. The results are presented in the context of interpolation of families of Banach spaces, and are applied to the study of submodules of Schatten classes. We also obtain nontrivial extensions of spaces without the CAP which also fail the CAP.
\end{abstract}

\maketitle

%    Text of article.
\section{Introduction}
A twisted sum of Banach spaces $Y$ and $Z$ is a short exact sequence
  \[
  \xymatrix{ 0 \ar[r] & Y \ar[r]^{i} & X
    \ar[r]^{q} & Z\ar[r]  &0}
  \]

\noindent where $X$ is a quasi-Banach space and the arrows are bounded linear maps.

The theory of twisted sums of Banach spaces has been successfully used in the study of the so called 3-space properties. Given a property $P$ of Banach spaces, $P$ is said to be a 3-space property (3SP) if for every Banach space $X$ having a closed subspace with $P$ and respective quotient also with $P$, $X$ has $P$. Simple examples of 3SP are separability, having finite dimension and reflexivity.

To see how twisted sums and 3SP are related, consider the following problem of Palais: is being isomorphic to a Hilbert space a 3SP? Finding a counterexample to this problem corresponds to finding a twisted sum
  \[
  \xymatrix{ 0 \ar[r] & H_1 \ar[r] & X
    \ar[r] & H_2\ar[r]  &0}
  \]
  
\noindent where $H_1$ and $H_2$ are a Hilbert spaces and $H_1$ is not complemented in $X$ (in this case, $X$ is automatically isomorphic to a Banach space).

In \cite{Enflo01}, Enflo, Lindenstrauss and Pisier show that there is such counterexample. Kalton and Peck gave a solution some years later (\cite{Kalton01}) using nonlinear maps called quasi-linear. They obtained a twisted sum:
\begin{equation}\label{eq:1.1}
  \xymatrix{ 0 \ar[r] & \ell_2 \ar[r] & Z_2
    \ar[r] & \ell_2\ar[r]  &0}
\end{equation}
  
\noindent in which the copy of $\ell_2$ in $Z_2$ is not complemented.

The Kalton-Peck space $Z_2$ also appears in a construction due to Rochberg and Weiss which is possible every time we have a compatible pair $(X_0, X_1)$ in the sense of interpolation (see \cite{Rochberg01}). Given $(X_0, X_1)$, for each $\theta \in (0, 1)$ we have a twisted sum of $X_{\theta}$ with itself, denoted $dX_{\theta}$. In this context the Kalton-Peck space comes induced by the interpolation scale $(\ell_{\infty}, \ell_1)$ at $\theta = \frac{1}{2}$.

Given two Banach spaces $Y$ and $Z$, we always have a trivial twisted sum by means of the direct sum $Y \oplus Z$ with the obvious inclusion and quotient map. So the first objective in the theory is obtaining nontrivial twisted sums, i.e., twisted sums in which the subspace is not complemented in the middle space.

In \cite{Kalton03}, under the assumption of super-reflexivity, Kalton shows that the twisted sum induced by a compatible pair of K\"othe function spaces $(X_0,X_1)$ is boundedly trivial (a subclass of
trivial twisted sums) exactly when $X_0=X_1$. One of the results of a preprint of Castillo, Ferenczi and Gonz\'alez (\cite{CastilloUnpublished01}) is that, under the same assumptions, the twisted sum induced by a compatible pair of K\"othe function spaces $(X_0, X_1)$ at $\theta$ is trivial precisely when $X_1$ is a weighted version of $X_0$. The question of which interpolation scales generate trivial twisted sums is still open in the general scenario.

Banach's Hyperplane Problem asks if Banach spaces are always isomorphic to its hyperplanes. In \cite{Gowers1994}, Gowers answered this problem in the negative. The Kalton-Peck space is naturally isomorphic to its subspaces of codimension $2$, but it is still unknown if it is isomorphic to its hyperplanes. Since it appears more naturally than Gower's contruction, it would be interesting to know if it also answers in the negative the Hyperplane Problem.

In that direction, \cite{Castillo2015} suggests that this problem is related to the singularity of the quotient map in the twisted sum \eqref{eq:1.1}. So another objective is to obtain twisted sums in which the quotient map is strictly singular.

Castillo, Ferenczi and Gonz\'{a}lez (\cite{Castillo01}) studied the singularity of twisted sums induced by interpolation when the spaces in the scale have a K\"othe function space or unconditional structure, or at least a Schauder basis with some form of local unconditionality. By studying the quasi-linear maps that define the twisted sums and how they relate to the structure of the spaces in the interpolation scale, they were able to obtain results ensuring nontriviality or singularity.

In this work we study twisted sums induced by interpolation when the spaces in the interpolation scale do not necessarily satisfy any form of unconditionality. We do not need the presence of a Schauder basis either. Using the classical concepts of Rademacher type and cotype of a Banach space, we give results in the same spirit of those in \cite{Castillo01}, without conditions of an unconditional or function space structure.

The structure of the paper is the following: in the remainder of the Introduction we give the necessary background on twisted sums \hyperref[sec:1.1]{(1.1)} and complex interpolation \hyperref[sec:1.2]{(1.2)}, and how the two of them are related \hyperref[sec:1.3]{(1.3)}. We do this in the context of interpolation of families, mixing the interpolation methods of \cite{Coifman1982} and \cite{Krein1982}, detailing the conditions that guarantee that the scale induces a twisted sum by means of the pushout. We point out that according to \cite{Kalton20031131} the framework of analytic families may be applied to the interpolation method of \cite{Coifman1982}, but there is one technical detail that is overlooked (see \hyperref[pro:1.5]{Proposition 1.5}).

\hyperref[sec:2]{Section 2} is devoted to obtaining conditions on the Rademacher type of the spaces of the interpolation scheme that ensure nontriviality or singularity of the induced twisted sum. Let $\mathbb{S} = \{z \in \mathbb{C} : 0 < Re(z) < 1\}$, and $\mathbb{S}_j = \{z \in \mathbb{C} : Re(z) = j\}$, $j = 0, 1$. We prove the following result:
\begin{theoremtype}
Let $\{(X_z)_{z \in \partial\mathbb{S}}, X\}$ be an interpolation family for which the spaces $X_z$, for a.e.\ $z \in \mathbb{S}_j$ have type $p_j$ with uniformly bounded constants, that $p_{X_z} = p_j$, $j=0, 1$, and that $p_0 \neq p_1$. Consider $p$ given by $\frac{1}{p} = \frac{1 - Re(z_0)}{p_0} + \frac{Re(z_0)}{p_{1}}$.
 
 \textit{a)} Suppose that $W \subset X_{z_0}$ is an infinite dimensional closed subspace such that $p_W = p$. Then the twisted sum induced by $W$ is not trivial. In particular, if $X_{z_0}$ is infinite dimensional and $p_{X_{z_0}} = p$, $dX_{z_0}$ is a nontrivial extension of $X_{z_0}$.
 
 \textit{b)} If $X_{z_0}$ is infinite dimensional and $p_W = p$ for every infinite dimensional closed subspace $W \subset X_{z_0}$, then $dX_{z_0}$ is a singular extension of $X_{z_0}$.
\end{theoremtype}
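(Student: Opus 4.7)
My plan is to handle both parts with a single contradiction strategy: I argue (a) directly, and I obtain (b) as an immediate application of (a) to every infinite-dimensional closed subspace of $X_{z_0}$, using the standard fact that the quotient map in a short exact sequence of Banach spaces is strictly singular if and only if the induced (pullback) twisted sum on every infinite-dimensional closed subspace of the quotient is nontrivial.

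For part (a), suppose the twisted sum induced by $W$ is trivial. By the characterization of triviality in terms of quasi-linear maps reviewed in Section 1, the quasi-linear map $\Omega : X_{z_0} \to X_{z_0}$ associated to the interpolation family at $z_0$, when restricted to $W$, differs from a linear map $L : W \to X_{z_0}$ by a map that is bounded in the $X_{z_0}$-norm. The aim is to use this linear selection to derive a strictly better type exponent $r > p$ for $W$, contradicting $p_W = p$.

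Concretely, for $w_1, \ldots, w_n \in W$ with $\|w_i\|_{X_{z_0}} \leq 1$, I would pick analytic lifts $F_i$ in the function space used to define the interpolation family, with $F_i(z_0) = w_i$, boundary norms $\|F_i(z)\|_{X_z}$ on $\mathbb{S}_j$ nearly minimal, and $F_i$ realizing $\Omega(w_i)$ as its derivative datum at $z_0$. The bounded linear selection $L$ lets one subtract an analytic correction from $F_i$ so that the derivative datum becomes $\Omega(w_i) - L(w_i)$, which is now uniformly bounded by a multiple of $\|w_i\|_{X_{z_0}}$. Summing against Rademacher signs, applying the boundary type-$p_j$ inequalities (valid a.e.\ on $\mathbb{S}_j$ with uniform constants), and then applying the Poisson-kernel interpolation at $z_0$ built into the interpolation norm, produces
\[
\mathbb{E}_{\epsilon} \Big\| \sum_i \epsilon_i w_i \Big\|_{X_{z_0}} \leq C \Big( \sum_i \|w_i\|_{X_{z_0}}^{r} \Big)^{1/r}
\]
with $r$ given by a modified convex combination that strictly improves $p$. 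The strict improvement is forced by the hypothesis $p_0 \neq p_1$, which prevents the relevant H\"older/Poisson inequality from degenerating.

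The hardest step will be the modification procedure together with the quantification of the improvement. One needs to extend $L$ to an analytic $X$-valued function with controlled boundary behavior, so that the correction stays inside the function space defining the interpolation norm, and then perform a careful Poisson-integral computation to show that replacing the derivative datum $\Omega(w_i)$ by the uniformly bounded $\Omega(w_i) - L(w_i)$ genuinely upgrades the interpolated type exponent from $p$ to some $r > p$. The strict inequality $r > p$, needed for the contradiction, is the place where $p_0 \neq p_1$ is essential.
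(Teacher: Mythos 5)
Your reduction of part (b) to part (a) is correct and matches the paper, which likewise treats (b) as an immediate consequence of (a) via the standard characterization of strict singularity. The problem is entirely in your sketch of (a), which contains a genuine gap.

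The core difficulty is that your contradiction — showing $W$ would have type $r > p$ — is not what triviality delivers, and your outlined derivation would not produce it. Even granting everything you propose (analytic lifts $F_i$ with near-minimal boundary norms and the derivative datum corrected to the bounded quantity $\Omega(w_i)-L(w_i)$), summing against Rademacher signs on each boundary line $\mathbb{S}_j$ gives a type-$p_j$ bound there, and the Poisson-kernel interpolation of these bounds yields precisely $\mathbb{E}\big\|\sum_i\epsilon_i w_i\big\|_{z_0}\le C\big(\sum_i\|w_i\|^p\big)^{1/p}$, i.e.\ type $p$ — nothing better. The paper never proves nor uses a polynomial upgrade of the type exponent. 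Instead, it multiplies each lift by the weight $\beta_n(z)^{-1}$ with $\beta_n(z)=\beta_{n,0}^{1-z}\beta_{n,1}^{z}$, where $\beta_{n,j}=\esssup_{z\in\mathbb{S}_j}\beta_n(X_z)$ and $\beta_n(X)$ is the Rademacher-average estimator; this normalizes the boundary contributions to have uniformly bounded Rademacher averages, and differentiation at $z_0$ then produces the crucial additive term $\big(\sum_i\epsilon_i w_i\big)\log\tfrac{\beta_{n,0}}{\beta_{n,1}}$. From this, triviality of $\Omega|_W$ forces $\big|\log\tfrac{\beta_{n,0}}{\beta_{n,1}}\big|\,\beta_n(W)\lesssim\beta_{n,0}^{1-\theta}\beta_{n,1}^{\theta}\sim n^{1/p}$, a \emph{logarithmic} improvement on $\beta_n(W)$: together with $\beta_n(W)\gtrsim n^{1/p}$ (a Maurey--Pisier consequence of $p_W=p$), this says $\big|\log\tfrac{\beta_{n,0}}{\beta_{n,1}}\big|$ is bounded, whereas $p_0\ne p_1$ forces it to grow like $|1/p_0-1/p_1|\log n$. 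You are missing this weight function; it is the entire mechanism, and $p_0\ne p_1$ enters only through the unboundedness of the resulting $\log n$ factor, not through any "strictly better convex combination."

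There is also a secondary but real obstruction in your "analytic correction" step. The linear map $L$ furnished by triviality is merely linear with $\Omega|_W-L$ bounded; $L$ itself is in general unbounded and maps into the containing space $\mathcal{U}$, not into $X$. There is no reason the naive correction $G_i(z)=\psi(z)L(w_i)$ lies in $\mathcal{F}$ or has controlled boundary norms, so the modified lift $F_i-G_i$ need not be admissible and its boundary Rademacher averages need not satisfy the type-$p_j$ estimates you want to invoke. The paper sidesteps $L$ entirely: it never modifies the lifts, and it only uses triviality in the averaged form $\mathbb{E}\big\|\Omega\big(\sum_i\epsilon_i w_i\big)-\sum_i\epsilon_i\Omega(w_i)\big\|\lesssim\beta_n(X_{z_0})$, which follows from triviality without ever extending $L$ analytically.
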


In \hyperref[sec:3]{Section 3} we use a dualization argument to prove an analogous result for nontriviality under conditions on the cotype of the spaces of the interpolation scale, this time for interpolation of couples:
\begin{theoremcotype}
Let $\overline{X} = (X_0, X_1)$ be a compatible pair of Banach spaces. Suppose $\Delta(\overline{X})$ dense in $X_0$ and in $X_1$ and that at least one of the spaces $X_0$ or $X_1$ is reflexive. Suppose also that $X_0$ and $X_1$ have type strictly bigger than $1$, that $X_0$ has cotype $q_{X_0}$, that $X_1$ has cotype $q_{X_1}$, $q_{X_0} \neq q_{X_1}$, that $X_{\theta}$ is infinite dimensional and that $q_{X_{\theta}}$ satisfies $\frac{1}{q_{X_{\theta}}} = \frac{1-\theta}{q_{X_0}} + \frac{\theta}{q_{X_1}}$. Then $d_{\Omega_{\theta}}X_{\theta}$ is a nontrivial extension of $X_{\theta}$.
\end{theoremcotype}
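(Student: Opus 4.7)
The strategy is to reduce to Theorem 2.14 by passing to the dual scale. Under the hypothesis that $\Delta(\overline{X})$ is dense in both $X_0$ and $X_1$ and that at least one of them is reflexive, Calder\'on's duality theorem gives $[X_0^*, X_1^*]_\theta = (X_\theta)^*$, and moreover $X_\theta$ itself is reflexive. The Rochberg--Weiss twisted sum $d_{\Omega_\theta}X_\theta$ fits in an exact sequence
\[
0 \to X_\theta \to d_{\Omega_\theta}X_\theta \to X_\theta \to 0;
\]
dualizing this (using reflexivity) yields an exact sequence of duals which, under the above hypotheses, is canonically isomorphic to the Rochberg--Weiss twisted sum induced by the dual couple $(X_0^*, X_1^*)$ at $\theta$. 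Since a short exact sequence of Banach spaces splits if and only if its dual splits, it suffices to prove that the dual twisted sum is nontrivial.

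To apply Theorem 2.14(a) to the dual couple, I need to verify the type hypotheses. The assumption that $X_0$ and $X_1$ have type strictly bigger than $1$ makes them K-convex, so by Pisier's duality between type and cotype for K-convex spaces one has $p_{X_j^*} = q_{X_j}'$ for $j = 0, 1$; since $q_{X_0} \neq q_{X_1}$, the dual type indices also differ. Stability of type under complex interpolation shows that $X_\theta$ still has type strictly bigger than $1$ and is therefore K-convex, which gives $p_{X_\theta^*} = q_{X_\theta}'$ as well. A short computation shows that the assumed cotype equation $\frac{1}{q_{X_\theta}} = \frac{1-\theta}{q_{X_0}} + \frac{\theta}{q_{X_1}}$ is equivalent to $\frac{1}{p_{X_\theta^*}} = \frac{1-\theta}{p_{X_0^*}} + \frac{\theta}{p_{X_1^*}}$. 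Taking $W = X_\theta^*$, which is infinite dimensional since $X_\theta$ is, Theorem 2.14(a) yields nontriviality of the twisted sum on the dual scale, and hence of $d_{\Omega_\theta}X_\theta$.

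The main obstacle is identifying the dual of the Rochberg--Weiss twisted sum induced by $(X_0, X_1)$ at $\theta$ with the one induced by $(X_0^*, X_1^*)$ at $\theta$. This is where the density of $\Delta(\overline{X})$ and the reflexivity hypothesis carry the weight: they ensure both that Calder\'on duality is available and that the differential $\Omega_\theta$ transfers correctly under dualization so that the two twisted sums are genuinely isomorphic, not merely related. Once this is in place, the remaining ingredients are standard: interpolation of type, Pisier's K-convexity theorem, and the elementary fact that splitting of a short exact sequence of reflexive Banach spaces is preserved under duality.
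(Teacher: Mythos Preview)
Your proposal is correct and follows essentially the same route as the paper: dualize via Calder\'on's theorem, identify the dual of the Rochberg--Weiss twisted sum with the twisted sum of the dual scale (the paper cites Rochberg--Weiss and a survey of Cwikel for this identification), convert the cotype hypotheses into type hypotheses on $X_0^*, X_1^*, X_\theta^*$ via Pisier's theorem, and invoke Theorem~2.14(a). One minor remark: you only need the easy direction ``original trivial $\Rightarrow$ dual trivial'' (which the paper states explicitly), not the full biconditional, so the reflexivity of $X_\theta$ is not actually required for that step.
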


In \hyperref[sec:4]{Section 4}, we give some examples. We call attention to examples \ref{sec:4.3}, where we exploit the known nonsigularity of certain twisted sums to obtain results on the structure of submodules of Schatten classes, and \ref{sec:4.4}, where we get nontrivial twisted sums such that the three spaces in the short exact sequences do not have the compact approximation property. This last result seems not to be obtainable only from Kalton's results or the ones from \cite{Castillo01}.

\subsection{Twisted Sums and Extensions}\label{sec:1.1}
We present here the necessary background on the theory of twisted sums of Banach spaces.

We recall that a linear operator $T : X_1 \rightarrow X_2$ is called \textit{strictly singular} if its restriction to any closed infinite dimensional subspace of $X_1$ is not an isomorphism. If $Y$ is a closed subspace of $X_2$, we let $q_Y : X_2 \rightarrow X_2/Y$ be the quotient map. The operator $T$ it is called \textit{strictly cosingular} if $q_Y \circ T$ being surjective implies $Y$ of finite codimension in $X_2$.

Consider a twisted sum
  \[
  \xymatrix{ 0 \ar[r] & Y \ar[r]^{i} & X
    \ar[r]^{q} & Z\ar[r]  &0}
  \]
We suppose that $X$ is isomorphic to a Banach space. The twisted sum is said to be:
\begin{itemize}
    \item \textit{trivial} if $i(Y)$ is complemented in $X$;
    \item \textit{singular} if $q$ is strictly singular;
    \item \textit{cosingular} if $i$ is strictly cosingular.
\end{itemize}

One also refers to $X$ as a twisted sum of $Y$ and $Z$. If $Y = Z$, $X$ is also called an \textit{extension} of $Y$.

Given two twisted sums $X_1$ and $X_2$ of $Y$ and $Z$, they are \textit{equivalent} if there is a bounded linear operator $T : X_1 \rightarrow X_2$ making the following diagram commute:
  \[
  \xymatrix{ 0 \ar[r] & Y \ar[r]\ar@{=}[d] & X_1
    \ar[r]\ar[d]^{T} & Z\ar[r] \ar@{=}[d] &0 \\ 0 \ar[r] & Y
    \ar[r] & X_2 \ar[r]& Z\ar[r] &0}
  \]

If the identities in the diagram are instead isomorphisms, i.e., if we have a commutative diagram
  \[
  \xymatrix{ 0 \ar[r] & Y_1 \ar[r]\ar[d] & X_1
    \ar[r]\ar[d]^{T} & Z_1\ar[r] \ar[d] &0 \\ 0 \ar[r] & Y_2
    \ar[r] & X_2 \ar[r]& Z_2\ar[r] &0}
  \]
  
\noindent where the vertical arrows are isomorphisms, $X_1$ and $X_2$ are said to be \textit{isomorphically equivalent}.
  
A twisted sum of $Y$ and $Z$ is trivial if and only if it is equivalent to the trivial twisted sum $Y \oplus Z$.

Twisted sums are induced by maps $F : Z \rightarrow Y$ called \textit{quasi-linear} (\cite{Kalton01}), which are homogeneous maps for which there is a positive constant $K$ such that
\[
\|F(z_1 + z_2) - F(z_1) - F(z_2)\| \leq K(\|z_1\| + \|z_2\|)
\]

\noindent for all $z_1, z_2 \in Z$. A quasi-linear map $F$ induces a twisted sum $Y \oplus_F Z$ of $Y$ and $Z$, which is $Y \times Z$ with the quasi-norm
\[
\|(y, z)\| = \|y - Fz\|_Y + \|z\|_Z
\]

\noindent with the obvious inclusion and quotient. Reciprocally, given a twisted sum $X$ of $Y$ and $Z$, there is a quasi-linear map $F : Z \rightarrow Y$ such that $X$ is equivalent to $Y \oplus_F Z$: simply take $F = i^{-1} \circ (B - L)$, where $B$ is any bounded homogeneous selection for the quotient map, and $L$ is any linear selection for the quotient map (we recall that a selection is a right inverse).

Two quasi-linear maps $F, G : Z \rightarrow Y$ are said to be \textit{equivalent} if they induce equivalent twisted sums. In the same way, we have the concept of \textit{isomorphically equivalent} quasi-linear maps.
 
A quasi-linear map $F : Z \rightarrow Y$ is \textit{trivial} if there is a linear map $A : Z \rightarrow Y$ such that
\[
\|F - A\| = \sup\limits_{\|z\| \leq 1} \|F(z) - A(z)\| < \infty
\]

This happens if and only if $F$ induces the trivial twisted sum $Y \oplus Z$.

Given a closed subspace $W$ of $Z$, there is a twisted sum
  \[
  \xymatrix{ 0 \ar[r] & Y \ar[r] & q^{-1}(W)
    \ar[r] & W\ar[r]  &0}
  \]
  
\noindent defined by $F|_{W}$. The twisted sum $X$ is singular if and only if $F|_{W}$ is nontrivial for every closed infinite dimensional subspace $W$ of $Z$. If the context is clear, we call the previous short exact sequence the \textit{twisted sum induced by} $W$.

Given a twisted sum $X$ of $Y$ and $Z$ and a bounded linear operator $\alpha : Y \rightarrow Y'$, we have a twisted sum of $Y'$ and $Z$ by means of the pushout (\cite{Castillo2013}):
  \[
  \xymatrix{ 0 \ar[r] & Y \ar[r]\ar[d]^{\alpha} & X
    \ar[r]\ar[d] & Z\ar[r] \ar@{=}[d] &0 \\ 0 \ar[r] & Y'
    \ar[r] & PO \ar[r]& Z\ar[r] &0}
  \]

Here, $PO = (X \oplus Y')/\Delta$, where $\Delta = \{(i(y), -\alpha(y)) : y \in Y\}$, the embedding is $y' \mapsto (0, y') + \Delta$, and the quotient map is $(x, y') + \Delta \mapsto q(x)$. If $X$ is defined by the quasi-linear map $F$, then $PO$ is defined by the quasi-linear map $\alpha \circ F$. Notice that if $X$ is a Banach space, so is $PO$.

For more information on twisted sums, we refer the reader to \cite{Castillo02}.

\subsection{Complex Interpolation}\label{sec:1.2}
Now we describe the complex method of interpolation we use, which is a modification of that of \cite{Coifman1982}, mixing it with that of \cite{Krein1982} (in the sense that, as in \cite{Krein1982}, we choose the intersection space). The modification is minor, so we refer to \cite{Coifman1982} for more details.

It was somewhat predicted by the authors of the original method (see their Appendix 2), but we do not know if it was presented elsewhere. This modification is only in order to simplify the calculation of the interpolation space. For example, if one wants to obtain Ferenczi's space (\cite{Ferenczi01}), $c_{00}$ is the natural choice of intersection space, and one does not have to actually compute the intersection. 

The applications given in \hyperref[sec:4]{Section 4} use only interpolation of couples, but since the proofs of \hyperref[sec:2]{Section 2} pass without great difficulty to the context of families, we present them in this more general scenario.

We denote by $\mathbb{D}$ the open unit disk in the complex plane, and by $\mathbb{S}$ the strip $\{z \in \mathbb{C} : 0 < Re(z) < 1\}$.

If $z = j + it$, we let $dP_{z_0}(z) = dP_{z_0}(j + it) = P(z_0, j + it)dt$ be the harmonic measure on $\partial\mathbb{S}$ with respect to the point $z_0$, where $P(z_0, j + it)$ is the Poisson kernel on the strip. We denote by $\mathbb{S}_j$ the line $Re(z) = j$, $j = 0, 1$.

A family of Banach spaces $\{(X_z, \|.\|_z) : z \in \partial\mathbb{S}\}$ is an \textit{interpolation family} if each $X_z$ is continuously linearly included in a Banach space $\mathcal{U}$ and we fix a subspace $X$ of $\cap_{z \in \partial\mathbb{S}} X_z$ satisfying for some $z_0 \in \mathbb{S}$:
\begin{enumerate}
\item[I1] For every $x \in X$, $\|x\|_z$ is a measurable function on $\partial\mathbb{S}$ with respect to $dP_{z_0}$ and 
\[
\int_{\partial\mathbb{S}} \log^+\|x\|_{z} dP_{z_0}(z) < \infty
\]
\noindent where $\log^+ (t) = \max\{0, \log t\}$.
\item[I2]
There is a (fixed) measurable function $k$ on $\partial\mathbb{S}$ such that $\int\limits_{\partial\mathbb{S}} \log^+ k(z) dP_{z_0}(z) < \infty$ and for every $x \in X$ and every $z \in \partial\mathbb{S}$ we have $\|x\|_{\mathcal{U}} \leq k(z)\|x\|_z$.
\end{enumerate}

It follows that these conditions are satisfied for every $z_0 \in \mathbb{S}$. The space $\mathcal{U}$ is called a \textit{containing space} for the family.

So an interpolation family is actually a pair $\{(X_z)_{z \in \partial\mathbb{S}}, X\}$ satisfying the above conditions. The space $X$ plays a role analogous to that of the intersection space of classical complex interpolation for couples of Banach spaces. 

We recall now the definitions and some basic properties of the Nevanlinna and Smirnov classes of analytic functions. The \textit{Nevanlinna class} $N$ consists of all analytic functions $f$ defined on $\mathbb{D}$ such that the integrals
\begin{equation*}
    \int_0^{2\pi} \log^+\left|f(re^{i\theta})\right|d\theta
\end{equation*}

\noindent are uniformly bounded for $r < 1$. For such functions the nontangential limits $f(e^{i\theta})$ exist for almost every $\theta \in [0, 2\pi]$, and $\log\left|f(e^{i\theta})\right|$ is an integrable function unless $f = 0$.

The \textit{Smirnov class} $N^+$ consists of the functions in $N$ which satisfy
\begin{equation*}
    \lim_{r \rightarrow 1} \int_0^{2\pi} \log^+\left|f(re^{i\theta})\right|d\theta = \int_0^{2\pi} \log^+\left|f(e^{i\theta})\right|d\theta
\end{equation*}

We have the following characterizations: $f \in N$ ($N^+$) if and only if $\log^+\left|f(z)\right|$ has a (quasi-bounded) harmonic majorant. Therefore, they are invariant by composition on the right with a conformal equivalence of the unit disk.

We let $P_1, P_2$ be two distinct points of $\mathbb{C}$ of norm $1$ and $\varphi : \overline{\mathbb{D}}\setminus\{P_1, P_2\} \rightarrow \overline{\mathbb{S}}$ be a surjective conformal map on the interior of $\mathbb{D}$, continuous on its domain.

$N(\mathbb{S})$ ($N^+(\mathbb{S})$) is the class of analytic functions $f$ defined on $\mathbb{S}$ such that $f\circ\varphi \in N$ $(N^+)$. These classes are closed under sum and multiplication. Notice that these definitions are independent of the conformal map $\varphi$. For more information regarding $N$ and $N^+$, see \cite{Duren2000}, \cite{rosenblum1994topics} or \cite{Shapiro01}.

We state and prove for future use the following lemma, which is probably known:

\begin{lemma}\label{lem:1.1}
Suppose that $h \in N^+$. Then the function
\begin{equation*}
    g(z) = \frac{h(z) - h(0)}{z}
\end{equation*}

with $g(0) = h'(0)$ is in $N^+$.
\end{lemma}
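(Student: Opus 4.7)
The plan is to invoke the characterization recalled just before the statement: a function $f$ on $\mathbb{D}$ lies in $N^+$ precisely when $\log^+|f|$ admits a quasi-bounded harmonic majorant. Fixing such a majorant $u$ for $\log^+|h|$, I aim to produce one for $\log^+|g|$ of the form $u + C$ with $C$ a constant. First note that since $h(z) - h(0)$ vanishes at the origin, the singularity of $(h(z)-h(0))/z$ there is removable, so $g$ (with the convention $g(0) = h'(0)$) is genuinely analytic on $\mathbb{D}$.

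The estimate then splits naturally into two regions. On the annulus where $1/2 \leq |z| < 1$, the triangle inequality gives
$$|g(z)| \leq \frac{|h(z)| + |h(0)|}{|z|} \leq 2(|h(z)| + |h(0)|),$$
and the standard $\log^+$ manipulations $\log^+(ab) \leq \log^+ a + \log^+ b + \log 2$ etc.\ yield $\log^+|g(z)| \leq u(z) + C_1$ for some constant $C_1$ depending only on $h(0)$. On the closed disk $|z| \leq 1/2$, $g$ is analytic and therefore bounded by some finite $M$, giving $\log^+|g(z)| \leq \log^+ M$ there. Combining these bounds and using that $u \geq 0$, we conclude that $\log^+|g(z)| \leq u(z) + C$ throughout $\mathbb{D}$, with $C := \max(C_1,\log^+ M)$.

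It remains to check that $u + C$ is itself a quasi-bounded harmonic function, but this is immediate: if $u$ is the Poisson integral of a boundary function $\phi \in L^1(\partial\mathbb{D})$, then $u + C$ is the Poisson integral of $\phi + C \in L^1(\partial\mathbb{D})$, so it inherits the quasi-bounded property. Hence $\log^+|g|$ admits a quasi-bounded harmonic majorant, and $g \in N^+$ as desired. The only mildly delicate point is that the naive pointwise bound for $|g|$ degenerates as $|z| \to 0$, but this is neutralized by the region split above, since $g$ is bounded a priori near the origin by the maximum principle; no genuine obstacle arises.
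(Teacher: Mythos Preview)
Your argument is correct, and it takes a genuinely different route from the paper's. The paper proceeds in two separate stages working directly from the integral definitions: it first establishes $g \in N$ by bounding $\int_0^{2\pi}\log(1+|g(re^{i\theta})|)\,d\theta$ uniformly in $r$ (using subharmonicity to handle the problematic $-\log r$ term), and then verifies the $N^+$ limit condition by a direct computation splitting the integral over level sets $A_{r,s}=\{\theta:|h(re^{i\theta})-h(0)|>s\}$ and invoking that $h(z)-h(0)\in N^+$. You instead invoke the harmonic-majorant characterization stated just before the lemma and produce a single quasi-bounded majorant $u+C$ for $\log^+|g|$ by a region split near and away from the origin. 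Your approach is cleaner and more conceptual, collapsing the $N$ and $N^+$ verifications into one step; the paper's approach is more hands-on but has the virtue of not relying on the Poisson-integral description of quasi-bounded harmonic functions. Both are short, and neither hides any real difficulty---the only subtle point, that the naive bound $|g(z)|\le |h(z)-h(0)|/|z|$ blows up at $0$, is handled in each case (by you via compactness on $|z|\le 1/2$, by the paper via monotonicity of the subharmonic means).
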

\begin{proof}
We have that $g$ is analytic, and belonging to $N$ is equivalent to having the integrals
\[
\int_0^{2\pi} \log (1 + \left|f(re^{i\theta})\right|)d\theta
\]

\noindent uniformly bounded in $r < 1$ \cite{Shapiro01}.

But
\begin{equation*}
    \int_0^{2\pi} \log (1 + \left|g(re^{i\theta})\right|)d\theta \leq \int_0^{2\pi} \log (1 + \left|h(re^{i\theta}) - h(0)\right|)d\theta - 2\pi \log(r)
\end{equation*}

We have that $\log(1 + \left|g(z)\right|)$ is subharmonic, and therefore the integral on the left is nondecreasing in $r$ (\cite{Duren2000}, Theorem 1.6). Since $h(z) - h(0)$ is in $N$, we have that $g \in N$.

To see that $g \in N^+$, consider for $0 < r < 1$, $0 < s$, the sets
\[
A_{r, s} = \{\theta \in [0, 2\pi] : \left|h(re^{i\theta}) - h(0)\right| > s\}
\]

Since $h(z) - h(0) \in N^+$, we have:
\begin{eqnarray*}
&& \int_0^{2\pi} \log^+ \left|\frac{h(re^{i\theta}) - h(0)}{re^{i\theta}}\right| d\theta \\
& = & \int_{A_{r, r}} \log \left|h(re^{i\theta}) - h(0)\right| d\theta - \log(r) \lambda(A_{r, r}) \\
& = & \int_0^{2\pi} \log^{+}\left|h(re^{i\theta}) - h(0)\right|d\theta + \int_{A_{r, r} \setminus A_{r, 1}} \log\left|h(re^{i\theta}) - h(0)\right| d\theta \\
&& - \log(r)\lambda(A_{r, r}) \\
& \rightarrow & \int_0^{2\pi} \log^{+}\left|h(e^{i\theta}) - h(0)\right|d\theta \\
& = & \int_0^{2\pi} \log^+ \left|g(e^{i\theta})\right| d\theta
\end{eqnarray*}
and that ends the proof.
\end{proof}

\begin{definition}\label{def:1.2}
Given an interpolation family $\{(X_z)_{z \in \partial\mathbb{S}}, X\}$ we let $\mathcal{G}$ be the space of all finite sums $g(z) = \sum\limits \psi_j(z) x_j$, where $\psi_j \in N^+(\mathbb{S})$ and $x_j \in X$, for which
\begin{equation*}
    \|g\|_{\mathcal{G}} = \esssup_{z \in \partial\mathbb{S}} \|g(z)\|_z
\end{equation*}

\noindent is finite. We denote by $\mathcal{F}$ the completion of this space with respect to $\|.\|_{\mathcal{G}}$.
\end{definition}

As in \cite{Coifman1982}, we have that $\mathcal{F}$ is a subspace of the following Banach space $\mathcal{H}$: let $X(z)$ be the closure of $X$ in $X_z$, and $K$ be the function on $\mathbb{S}$ defined by $K(z) = \exp(u(z) + iv(z))$, where $u(z) = \int_{\partial\mathbb{S}} \log k(\gamma) dP_z (\gamma)$ and $v$ is a harmonic conjugate of $u$. $K$ never vanishes and has a.\ e.\ nontangential limits on $\partial\mathbb{S}$ such that $\left|K(z)\right| = k(z)$ for a.\ e.\ $z \in \partial\mathbb{S}$. 

\begin{definition}\label{def:1.3}
The Banach space $\mathcal{H}$ consists of the $\mathcal{U}-$valued analytic functions $h$ on $\mathbb{S}$ such that $\|h(z)/K(z)\|_{\mathcal{U}}$ in bounded on $\mathbb{S}$, the nontangential limit $h(z)\in X(z) \subset \mathcal{U}$ exists for a.\ e.\ $z \in \partial\mathbb{S}$, and $\|h(z)\|_z$ is an essentially bounded measurable function on $\partial\mathbb{S}$. 

The norm of an element $h$ of $\mathcal{H}$ is
\begin{equation*}
    \|h\|_{\mathcal{H}} = \esssup_{z \in \partial\mathbb{S}} \|h(z)\|_z
\end{equation*}
\end{definition}

For $x \in X$, let
\begin{equation*}
    \|x\|_{\{z_0\}} = \inf\{\|g\|_{\mathcal{G}} : g(z_0) = x\}
\end{equation*}

\noindent and let $X_{\{z_0\}}$ be the completion of $X$ with respect to this norm.

For $z_0 \in \mathbb{S}$, we have the interpolation space $X_{[z_0]} = \{f(z_0) : f \in \mathcal{F}\}$, with the quotient norm
\begin{equation*}
    \|x\|_{[z_0]} = \inf\{\|f\|_{\mathcal{F}} : f \in \mathcal{F}, f(z_0) = x\}
\end{equation*}

The definitions of \cite{Coifman1982} are the same as presented here in the case where all vectors $x$ in the intersection $\cap_{z \in \partial\mathbb{S}} X_z$ satisfy that $\|x\|_z$ is a measurable function and $X$ is formed by all vectors in the intersection satisfying condition I1. An inspection of the proofs in that article shows that the spaces defined here satisfy interpolation properties similar to that of Calder\'{o}n's interpolation spaces. We have the following analogue of Theorem 4.1 of \cite{Coifman1982}:
\begin{theorem}\label{thm:1.4}
Let $\{(A_z)_{z \in \partial\mathbb{S}}, A\}$ and $\{(B_z)_{z \in \partial\mathbb{S}}, B\}$ be interpolation families with containing spaces $\mathcal{U}$ and $\mathcal{V}$, respectively. Let $M$ be a real function on $\partial\mathbb{S}$ such that $\log M(z)$ is absolutely integrable on $\partial\mathbb{S}$, and let $M(z_0) = \exp \int_{\partial\mathbb{S}} \log M(z) dP_{z_0}(z)$.
\begin{enumerate}
\item Let $T : A \rightarrow B$ be a linear map with $\|Ta\|_{B_z} \leq M(z)\|a\|_{A_z}$ for all $a \in A$ and for all $z \in \partial\mathbb{S}$. Then, for each $z_0 \in \mathbb{S}$, $T$ has a unique extension to an operator from $A_{\{z_0\}}$ into $B_{\{z_0\}}$ with norm at most $M(z_0)$.
\item Let $T : \mathcal{U} \rightarrow \mathcal{V}$ be a bounded operator mapping $A$ into $B$ with $\|Ta\|_{B_z} \leq M(z) \|a\|_{A_z}$ for all $a \in A$ and all $z \in \partial\mathbb{S}$. Then $T$ maps $A_{[z_0]}$ into $B_{[z_0]}$ with norm at most $M(z_0)$.
\end{enumerate}
\end{theorem}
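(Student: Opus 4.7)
The plan is to run the standard Calder\'on-type ``three lines'' argument: absorb the pointwise bound $\|Ta\|_{B_z}\le M(z)\|a\|_{A_z}$ into a uniform one by dividing through by an analytic multiplier whose modulus equals $M$ on $\partial\mathbb{S}$. Since $\log M$ is absolutely integrable with respect to $dP_{z_0}$, so are $(\log M)^{\pm}$; let $u$ be the harmonic extension of $\log M$ to $\mathbb{S}$, $v$ a harmonic conjugate, and set $N(z)=\exp(u(z)+iv(z))$. Then $N$ is analytic and nowhere zero on $\mathbb{S}$, $|N(z)|=M(z)$ a.e.\ on $\partial\mathbb{S}$, and the mean value property gives $|N(z_0)|=\exp(u(z_0))=M(z_0)$. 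The harmonic extensions of $(\log M)^{+}$ and $(\log M)^{-}$ furnish quasi-bounded harmonic majorants for $\log^{+}|N|$ and $\log^{+}|1/N|$, respectively, so \emph{both} $N$ and $1/N$ lie in $N^{+}(\mathbb{S})$.

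For part (1), fix $a\in A$ and $g(z)=\sum_j\psi_j(z)x_j\in\mathcal{G}_A$ with $g(z_0)=a$. I set
\[
f(z)=\frac{N(z_0)}{N(z)}\sum_j \psi_j(z)\,T(x_j).
\]
Since $N^{+}(\mathbb{S})$ is closed under multiplication and $1/N\in N^{+}(\mathbb{S})$, each coefficient $\psi_j(z)N(z_0)/N(z)$ is in $N^{+}(\mathbb{S})$; therefore $f\in\mathcal{G}_B$ and $f(z_0)=T(a)$. On $\partial\mathbb{S}$,
\[
\|f(z)\|_{B_z} \le \frac{|N(z_0)|}{|N(z)|}\,\|T(g(z))\|_{B_z} \le \frac{M(z_0)}{M(z)}\,M(z)\,\|g(z)\|_{A_z} = M(z_0)\,\|g(z)\|_{A_z}.
\]
Passing to essential suprema gives $\|Ta\|_{B_{\{z_0\}}}\le\|f\|_{\mathcal{G}_B}\le M(z_0)\|g\|_{\mathcal{G}_A}$, and the infimum over admissible $g$ yields $\|Ta\|_{B_{\{z_0\}}}\le M(z_0)\|a\|_{A_{\{z_0\}}}$. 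Density of $A$ in $A_{\{z_0\}}$ (by the definition of the completion) produces the unique extension.

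For part (2) the same multiplier trick is applied on $\mathcal{F}_A$. Given $f\in\mathcal{F}_A$ with $f(z_0)=a$, set $F(z)=T(f(z))\,N(z_0)/N(z)$. Because $T:\mathcal{U}\to\mathcal{V}$ is bounded and $f$ is $\mathcal{U}$-valued analytic, $F$ is $\mathcal{V}$-valued analytic on $\mathbb{S}$; the growth condition $\|F(z)/K_B(z)\|_{\mathcal{V}}$ bounded on $\mathbb{S}$ follows from the corresponding bound on $f$, the local boundedness of $1/N$ on $\mathbb{S}$, and an appropriate majorant in the construction of $K_B$ for the $B$-family. The boundary estimate is identical to the one above, so $F$ is in $\mathcal{F}_B$ with $\|F\|_{\mathcal{F}_B}\le M(z_0)\|f\|_{\mathcal{F}_A}$; taking the infimum over $f$ finishes the bound. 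The main technical obstacle throughout is the verification that $1/N\in N^{+}(\mathbb{S})$, which is what allows the division by $N(z)$ to keep us inside the classes defining both $\mathcal{G}$ and $\mathcal{F}$; this hinges precisely on the hypothesis that $\log M$ is \emph{absolutely} integrable, not merely $\log^{+}M$. Once that ingredient is secured, the remainder is a routine adaptation of the argument of Theorem 4.1 of \cite{Coifman1982}.
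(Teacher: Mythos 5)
Your approach is the standard analytic-multiplier ("three lines") argument, which is what the paper implicitly invokes by citing Theorem~4.1 of \cite{Coifman1982} (the paper itself gives no proof, only the reference). Your construction of $N$, the verification that both $N$ and $1/N$ lie in $N^{+}(\mathbb{S})$ from the absolute integrability of $\log M$, and the whole treatment of part (1) are correct.

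There is, however, a real gap in your handling of part (2). You define $F(z)=T(f(z))\,N(z_0)/N(z)$ and then check analyticity, a growth bound against $K_B$, and the boundary estimate; but those are precisely the defining properties of membership in $\mathcal{H}_B$, and the paper is explicit that $\mathcal{F}_B$ is a priori only a \emph{subspace} of $\mathcal{H}_B$ --- it is the closure of $\mathcal{G}_B$ in the $\esssup$ norm, and in general this inclusion is proper. So verifying $F\in\mathcal{H}_B$ does not show $F\in\mathcal{F}_B$, which is what you need to conclude $Ta\in B_{[z_0]}$. The cleaner (and correct) route is to note that the boundary estimate you already proved in part (1) shows the linear map $\Phi:g\mapsto (T\circ g)\cdot N(z_0)/N$ is bounded from $\mathcal{G}_A$ to $\mathcal{G}_B$ with norm $\le M(z_0)$; hence $\Phi$ extends by continuity to a bounded operator $\mathcal{F}_A\to\mathcal{F}_B$, and one then checks, using the boundedness of $T:\mathcal{U}\to\mathcal{V}$ together with the continuous inclusions $A_{[z_0]}\hookrightarrow\mathcal{U}$ and $B_{[z_0]}\hookrightarrow\mathcal{V}$, that evaluating $\Phi(f)$ at $z_0$ does produce $T(f(z_0))=Ta$. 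This also makes the growth-condition verification you attempt unnecessary; as written that step is not convincing anyway, since there is no general relation between the majorants $K_A$ and $K_B$ of the two families, and so no reason for $|N(z_0)/N(z)|\,\|T\|\,|K_A(z)|/|K_B(z)|$ to be bounded on $\mathbb{S}$.
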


We will always assume that $X_{\{z_0\}} = X_{[z_0]}$ isometrically, and we drop the braces and brackets and write $X_{z_0}$. The next proposition shows that this happens for every $z_0 \in \mathbb{S}$ if and only if $\mathcal{F}$ is admissible in the sense of \cite{Kalton20031131}.

\begin{props}\label{pro:1.5}
Let $\{(X_z)_{z \in \partial\mathbb{S}}, X\}$ be an interpolation family and let $\psi : \mathbb{S} \rightarrow \mathbb{D}$ be a conformal map such that $\psi(z_0) = 0$. Then $X_{\{z_0\}} = X_{[z_0]}$ if and only if for all $f \in \mathcal{F}$ with $f(z_0) = 0$ we have $\frac{f}{\psi} \in \mathcal{F}$.
\end{props}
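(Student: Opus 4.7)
The plan is to exploit that $|\psi| = 1$ a.e.\ on $\partial\mathbb{S}$, so multiplication and division by $\psi$ preserve $\mathcal{G}$-norms, while $\psi$ vanishes only at $z_0$, so division by $\psi$ converts the condition $f(z_0) = 0$ into an object one can still hope to have in $\mathcal{F}$. A preliminary observation is that evaluation $\delta_w : \mathcal{F} \to \mathcal{U}$ is bounded for each $w \in \mathbb{S}$ (the subharmonic function $\|g(\cdot)/K(\cdot)\|_{\mathcal{U}}$ is bounded on $\partial\mathbb{S}$ by $\|g\|_{\mathcal{G}}$ thanks to condition I2, and the maximum principle on the strip gives $\|g(w)\|_{\mathcal{U}} \leq |K(w)|\|g\|_{\mathcal{G}}$), so convergence in $\mathcal{F}$ implies pointwise convergence in $\mathcal{U}$ on the interior. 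The easy inequality $\|x\|_{[z_0]} \leq \|x\|_{\{z_0\}}$ holds for every $x \in X$ since $\mathcal{G} \subseteq \mathcal{F}$, so only the reverse inequality is at issue.

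For the direction $(\Leftarrow)$, given $x \in X$ I first produce a reference $g_x \in \mathcal{G}$ with $g_x(z_0) = x$ of finite norm by the outer-function construction from condition I1: pick an outer $\phi_x \in N^+(\mathbb{S})$ with $|\phi_x(z)|\|x\|_z \leq 1$ a.e.\ on $\partial\mathbb{S}$ and set $g_x(z) := \phi_x(z)\,x / \phi_x(z_0)$. Now, given $f \in \mathcal{F}$ with $f(z_0) = x$, the element $f - g_x \in \mathcal{F}$ vanishes at $z_0$, so by hypothesis $(f - g_x)/\psi \in \mathcal{F}$ and is approximable in $\mathcal{F}$-norm by some $h_n \in \mathcal{G}$. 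Since multiplication by $\psi$ is an isometry within $\mathcal{G}$, the sequence $\psi h_n + g_x \in \mathcal{G}$ takes value $x$ at $z_0$ and converges to $f$ in $\mathcal{F}$. This forces $\|x\|_{\{z_0\}} \leq \|f\|_{\mathcal{F}}$, and minimizing over $f$ completes the direction.

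For the direction $(\Rightarrow)$, let $f \in \mathcal{F}$ with $f(z_0) = 0$ and pick $g_n \in \mathcal{G}$ with $g_n \to f$ in $\mathcal{F}$; set $x_n := g_n(z_0) \in X$. The quotient map $\delta_{z_0} : \mathcal{F} \to X_{[z_0]}$ gives $x_n \to 0$ in $X_{[z_0]}$, and the hypothesis $X_{[z_0]} = X_{\{z_0\}}$ transfers this to $X_{\{z_0\}}$; so I can choose $h_n \in \mathcal{G}$ with $h_n(z_0) = x_n$ and $\|h_n\|_{\mathcal{G}} \to 0$. Then $G_n := g_n - h_n \in \mathcal{G}$ satisfies $G_n(z_0) = 0$ and still converges to $f$ in $\mathcal{F}$. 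Writing $G_n(z) = \sum_j \phi_j^{(n)}(z) y_j^{(n)}$ with $\phi_j^{(n)} \in N^+(\mathbb{S})$ and using $\sum_j \phi_j^{(n)}(z_0) y_j^{(n)} = 0$, I rewrite $G_n(z) = \sum_j (\phi_j^{(n)}(z) - \phi_j^{(n)}(z_0)) y_j^{(n)}$. Choosing a conformal $\varphi : \mathbb{D} \to \mathbb{S}$ with $\varphi(0) = z_0$, the Schwarz lemma makes $\psi \circ \varphi$ a rotation of $\mathbb{D}$, so dividing by $\psi$ on $\mathbb{S}$ corresponds after pullback to dividing by the identity on $\mathbb{D}$. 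Lemma \ref{lem:1.1} then places each $(\phi_j^{(n)} - \phi_j^{(n)}(z_0))/\psi$ in $N^+(\mathbb{S})$, and the boundary identity gives $G_n/\psi \in \mathcal{G}$ with $\|G_n/\psi\|_{\mathcal{G}} = \|G_n\|_{\mathcal{G}}$. Being Cauchy, $G_n/\psi$ converges in $\mathcal{F}$ to some $F$, and pointwise evaluation in $\mathcal{U}$ on $\mathbb{S}\setminus\{z_0\}$ combined with analyticity identifies $F = f/\psi$.

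The main obstacle I anticipate is exactly the last move of $(\Rightarrow)$: certifying that $G_n/\psi$ lands in $\mathcal{G}$ rather than merely in $\mathcal{H}$, because division by a function with an interior zero can easily push a Nevanlinna-type class function out of $N^+$. This is where Lemma \ref{lem:1.1}, combined with the Schwarz-lemma identification of $\psi$ with the disk coordinate after the conformal pullback, carries the whole argument; the outer-function construction in $(\Leftarrow)$ is a routine by-product of condition I1.
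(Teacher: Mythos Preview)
Your proof is correct and follows essentially the same route as the paper's: for $(\Rightarrow)$ both of you approximate $f$ by elements of $\mathcal{G}$ vanishing at $z_0$ (by correcting an arbitrary $\mathcal{G}$-approximant using the hypothesis $X_{\{z_0\}}=X_{[z_0]}$), then invoke Lemma~\ref{lem:1.1} after the conformal pullback to put $G_n/\psi$ in $\mathcal{G}$ and identify the limit; for $(\Leftarrow)$ the paper simply cites Stafney's Lemma~2.5 together with the existence of a $\mathcal{G}$-element taking value $x$ at $z_0$, while you spell out exactly that argument (produce $g_x$, divide $f-g_x$ by $\psi$, approximate, and multiply back). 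The only cosmetic difference is that the paper checks $f/\psi\in\mathcal{H}$ directly, whereas you recover this by identifying the $\mathcal{F}$-limit pointwise; both are fine.
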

\begin{proof}
Suppose $X_{\{z_0\}} = X_{[z_0]}$, and let $f \in \mathcal{F}$ with $f(z_0) = 0$. Then we can approximate $f$ by functions $g_n \in \mathcal{G}$ with $g_n(z_0) = 0$. Indeed, let $\epsilon > 0$, and take $g \in \mathcal{G}$ such that $\|f - g\|_{\mathcal{F}} \leq \frac{\epsilon}{3}$. So $\|g(z_0)\|_{z_0} \leq \frac{\epsilon}{3}$, and there is $h \in \mathcal{G}$ such that $h(z_0) = g(z_0)$ and $\|h\|_{\mathcal{G}} \leq \frac{2\epsilon}{3}$. Then $g - h \in \mathcal{G}$, $(g - h)(z_0) = 0$, and $\|f - (g - h)\|_{\mathcal{G}} \leq \epsilon$.

Now it is easy to check using \hyperref[lem:1.1]{Lemma 1.1} and taking $\varphi = \psi^{-1}$ that $\frac{g_n}{\psi} \in \mathcal{G}$, $\frac{f}{\psi} \in \mathcal{H}$, and that $\frac{g_n}{\psi} \rightarrow \frac{f}{\psi}$.

For the converse, it enough to use the fact that for $x \in X$ there is a function $\psi \in N^+(\mathbb{S})$ such that $\psi(z_0) = 1$ and $\psi(z) x \in \mathcal{G}$ (\cite{Coifman1982}) and prove that if $\mathcal{N} = \{f \in \mathcal{F} : f(z_0) = 0\}$, then $\mathcal{G} \cap \mathcal{N}$ is dense in $\mathcal{N}$. All one has to do is follow the same reasoning of the proof of \cite{Stafney01}, Lemma 2.5. (see also the remark (i) in Appendix 1 of \cite{Coifman1982}).
\end{proof}

This space coincides with Calder\'on's space when $X_{j + it} = X_j$, for $j = 0, 1$, $t \in \mathbb{R}$, and we take as containing space $X_0 + X_1$ and $X = X_0 \cap X_1$ (see corollary $5.1$ of \cite{Coifman1982} and Section 3, where we recall Calder\'on's method of interpolation).

We also have the following formula for the norm of $x \in X_{z_0}$ $((2.5)$ of \cite{Coifman1982}, with $p =1$): 
\begin{equation}\label{eq:1.2}
    \|x\|_{z_0} = \inf\{\exp\int_{\partial\mathbb{S}} \log \|f(z)\|_z dP_{z_0}(z)\} = \inf\{\int_{\partial\mathbb{S}}\|f(z)\|_z dP_{z_0}(z)\}
\end{equation}

\noindent where the infimum is taken over all $f \in \mathcal{F}$ with $f(z_0) = x$. Recalling that $\int_{\mathbb{S}_1} dP_{z_0}(z) = Re(z_0)$, we apply Jensen's inequality to the first equality to get
\begin{equation}\label{eq:1.3}
    \|f(z_0)\|_{z_0} \leq \Big(\int_{\mathbb{S}_0} \|f(z)\|_z dP^0_{z_0}(z)\Big)^{1 - Re(z_0)} \Big( \int_{\mathbb{S}_1} \|f(z)\|_z dP^1_{z_0}(z)\Big)^{Re(z_0)}
\end{equation}

\noindent for every $f \in \mathcal{F}$, where $dP^j_{z_0}$ are the respective probability measures defined by $dP_{z_0}$ on $\mathbb{S}_j$, $j = 0, 1$.

\subsection{Relation between interpolation and twisted sums}\label{sec:1.3}
Let $\delta_{z_0} : \mathcal{F} \rightarrow X_{z_0}$ be the evaluation at $z_0$, and $\delta'_{z_0} : \mathcal{F} \rightarrow \mathcal{U}$ be the evaluation of the derivative at $z_0$.

The following proposition is a known consequence of $\mathcal{F}$ being an admissible space of analytic functions (see \cite{Cabello2015}, for example), but we prove it here to clarify the role of \hyperref[pro:1.5]{Proposition 1.5}.

\begin{props}\label{pro:1.6}
$\delta'_{z_0} : \ker(\delta_{z_0}) \rightarrow X_{z_0}$ is bounded and onto.
\end{props}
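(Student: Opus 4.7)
The plan is to use Proposition 1.5 as the main input, together with the fact that the conformal map $\psi : \mathbb{S} \to \mathbb{D}$ with $\psi(z_0) = 0$ has unimodular boundary values on $\partial\mathbb{S}$. Since $\psi$ is a conformal bijection between $\mathbb{S}$ and $\mathbb{D}$ (extended continuously off of two points), it maps $\partial\mathbb{S}$ into $\partial\mathbb{D}$ and hence $|\psi(z)| = 1$ for almost every $z \in \partial\mathbb{S}$. This will make multiplication and division by $\psi$ harmless for the $\mathcal{F}$-norm.

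For boundedness, I would take $f \in \ker(\delta_{z_0})$ and apply Proposition 1.5 to write $f = \psi g$ with $g := f/\psi \in \mathcal{F}$. A direct differentiation at $z_0$, using $\psi(z_0) = 0$, gives $f'(z_0) = \psi'(z_0) g(z_0) + \psi(z_0) g'(z_0) = \psi'(z_0) g(z_0)$, so $\delta'_{z_0}(f) = \psi'(z_0)\, \delta_{z_0}(g)$. Since $|\psi(z)| = 1$ a.e.\ on $\partial\mathbb{S}$, one has $\|g(z)\|_z = \|f(z)\|_z$ a.e., hence $\|g\|_{\mathcal{F}} = \|f\|_{\mathcal{F}}$. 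Combined with the obvious estimate $\|g(z_0)\|_{z_0} \leq \|g\|_{\mathcal{F}}$ coming from the definition of the quotient norm on $X_{z_0} = X_{[z_0]}$, this yields
\[
\|\delta'_{z_0}(f)\|_{z_0} \;\leq\; |\psi'(z_0)|\,\|f\|_{\mathcal{F}}.
\]

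For surjectivity, I would reverse the construction. Given $x \in X_{z_0}$, pick $g \in \mathcal{F}$ with $g(z_0) = x$ and set $f := (\psi/\psi'(z_0))\,g$. The function $\psi$ is bounded and analytic on $\mathbb{S}$, hence lies in $N^+(\mathbb{S})$, so multiplying a generator $\sum \psi_j x_j \in \mathcal{G}$ by $\psi$ produces $\sum (\psi\psi_j)x_j \in \mathcal{G}$ (using that $N^+(\mathbb{S})$ is closed under multiplication). Passing to the completion and using again $|\psi|=1$ a.e.\ on $\partial\mathbb{S}$, we get $\psi\cdot\mathcal{F} \subset \mathcal{F}$ with norm bounded by $1$; in particular $f \in \mathcal{F}$. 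A direct computation shows $f(z_0) = 0$ and $f'(z_0) = g(z_0) = x$, so $f \in \ker(\delta_{z_0})$ and $\delta'_{z_0}(f) = x$.

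The main subtlety is the appeal to Proposition 1.5 in the boundedness half: without the standing hypothesis $X_{\{z_0\}} = X_{[z_0]}$, the function $f/\psi$ would only be known to lie in the larger space $\mathcal{H}$ and we would lose norm control. Surjectivity is the easy direction since it only requires multiplying by $\psi$, which is always a contractive operation on $\mathcal{F}$.
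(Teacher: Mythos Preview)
Your proof is correct and follows essentially the same approach as the paper: use Proposition~1.5 (equivalently the standing hypothesis $X_{\{z_0\}}=X_{[z_0]}$) to divide by $\psi$ for boundedness, and multiply by $\psi$ for surjectivity, exploiting that $|\psi|=1$ a.e.\ on $\partial\mathbb{S}$. The paper's write-up is a bit terser, but the argument and the constant $|\psi'(z_0)|$ obtained are the same.
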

\begin{proof}
Let $f \in \mathcal{F}$ and $f(z_0) = 0$. By \hyperref[pro:1.5]{Proposition 1.5}, if $\psi : \mathbb{S} \rightarrow \mathbb{D}$ is a conformal map such that $\psi(z_0) = 0$, we have $g = \left|\psi'(z_0)\right| \frac{f}{\psi} \in \mathcal{F}$, $g(z_0) = f'(z_0) \in X_{z_0}$, and $\|f'(z_0)\|_{z_0} \leq \|g\|_{\mathcal{F}} = \left|\psi'(z_0)\right| \|f\|_{\mathcal{F}}$. 

To see that it is onto, given $f \in \mathcal{F}$ we have $h = \psi f \in \ker(\delta_{z_0})$ and $h'(z_0) = \psi'(z_0) f(z_0)$.
\end{proof}

Therefore, we get an extension of $X_{z_0}$ by means of the pushout:
  \[
  \xymatrix{ 0 \ar[r] & \ker\delta_{z_0} \ar[r]\ar[d]^{\delta'_{z_0}} & \mathcal{F}\ar[d]
    \ar[r] & X_{z_0}\ar[r] \ar@{=}[d] &0 \\ 0 \ar[r] & X_{z_0}
    \ar[r] & PO \ar[r]& X_{z_0}\ar[r] &0}
  \]
  
If $B_{z_0}$ is a homogeneous bounded selection for $\delta_{z_0}$ and $L_{z_0}$ is a linear selection for $\delta_{z_0}$, then $PO$ is defined by the quasi-linear map $\delta'_{z_0} \omega_{z_0} : X_{z_0} \rightarrow X_{z_0}$, where $\omega_{z_0} = B_{z_0} - L_{z_0}$. Notice that $PO$ is a Banach space.

We can simplify the calculations of the next sections by working with the map $\Omega_{z_0} = \delta'_{z_0} B_{z_0}$ instead of $\omega_{z_0}$ (\cite{Castillo01}, section 3.1; \cite{Kalton20031131}, section 10): consider the space
\[
d_{\Omega_{z_0}}X_{z_0} = \{(x, y) \in \mathcal{U} \times X_{z_0} : x - \Omega_{z_0}y \in X_{z_0}\}
\]

\noindent with the quasinorm $\|(x, y)\| = \|x - \Omega_{z_0}y\|_{z_0} + \|y\|_{z_0}$. With the embedding $x \mapsto (x, 0)$ and quotient map $(x, y) \mapsto y$ this is an extension of $X_{z_0}$ equivalent to $PO$. The map $\Omega_{z_0}$ depends on the choice of $B_{z_0}$, but it is easy to see that any other choice would give an equivalent extension.

Therefore, if no confusion is to be made, we shall refer to the space $d_{\Omega_{z_0}}X_{z_0}$ simply as $dX_{z_0}$.

\section{Type and extensions induced by interpolation}\label{sec:2}

We recall the definition of (Rademacher) type of a normed space. A general reference for this topic is \cite{Albiac01}.
\begin{definition}\label{def:2.1}
Let $X$ be a normed space and $p \in [1, 2]$. $X$ has \textit{type} $p$ if there is $K>0$ such that given any finite sequence of vectors $x_1, ..., x_n \in X$, we have
\[
\mathbb{E}\Bigg\|\sum\limits_{j=1}^n \epsilon_j x_j\Bigg\| \leq K \Bigg(\sum\limits_{j=1}^n \|x_j\|^p\Bigg)^{\frac{1}{p}}
\]

\noindent where the expected value is taken over all possible choices of signs $\epsilon_j = \pm 1$.
\end{definition}

Note that every normed space has type $1$ with constant $1$. The supremum of the types of $X$ is denoted by $p_X$.

To simplify notation, we will write $\theta = Re(z_0)$.

A first relation between type and interpolation is the following classical result, here stated in our context (see \cite{Beauzamy01} and \cite{Kalton1989}, for example):
\begin{props}\label{pro:2.2}
Let $\{(X_z)_{z \in \partial\mathbb{S}}, X\}$ be an interpolation family such that $X_z$ has type $p_j$ for $z \in \mathbb{S}_j$, $j = 0, 1$, with constant $C(z)$, and that either 
\begin{enumerate}
\item $C(z)$ is an integrable function with respect to $dP_{z_0}$, or
\item $\esssup_{z \in \partial\mathbb{S}} C(z) < \infty$.
\end{enumerate}

Then $X_{z_0}$ has type $p$, where $\frac{1}{p} = \frac{1 - \theta}{p_0} + \frac{\theta}{p_1}$, with best constant at most
\begin{enumerate}
\item $C = (\int_{\mathbb{S}_0} C(z) dP^0_{z_0}(z))^{1 - \theta}(\int_{\mathbb{S}_1} C(z) dP^1_{z_0}(z))^{\theta}$
\item$C = \esssup_{z \in \partial\mathbb{S}} C(z)$. 
\end{enumerate}
respectively.
\end{props}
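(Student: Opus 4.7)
The plan is to express $\sum_k \epsilon_k x_k$ as the value at $z_0$ of a carefully chosen function in $\mathcal{F}$ and apply the Jensen-type inequality \eqref{eq:1.3} together with the boundary type hypothesis. Given $x_1, \ldots, x_n \in X_{z_0}$, one sets $a_k = \|x_k\|_{z_0}$ (WLOG all positive), $s = (\sum_k a_k^p)^{1/p}$, and $\mu_k = a_k/s$, and picks $f_k \in \mathcal{F}$ with $f_k(z_0) = x_k$ and $\|f_k\|_{\mathcal{F}} \leq (1+\varepsilon) a_k$ for small $\varepsilon > 0$.

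The key device is a holomorphic scalar weight that rebalances the differing $\ell_{p_j}$-scales on the two boundary lines. Define the bounded entire function $\gamma(z) = p\left(\frac{1-z}{p_0} + \frac{z}{p_1}\right) - 1$, so that $\mathrm{Re}\,\gamma(j + it) = p/p_j - 1$ and $\mathrm{Re}\,\gamma(z_0) = 0$, and set $\lambda_k(z) = \mu_k^{\gamma(z) - \gamma(z_0)} \in N^+(\mathbb{S})$. Then $\lambda_k(z_0) = 1$ (the subtraction of $\gamma(z_0)$ absorbs the complex phase that would otherwise appear when $\mathrm{Im}\,z_0 \neq 0$) and $|\lambda_k(z)| = \mu_k^{p/p_j - 1}$ on $\mathbb{S}_j$. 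Since $\lambda_k$ is bounded and in $N^+(\mathbb{S})$, multiplication preserves $\mathcal{F}$; hence $F(z) = \sum_k \epsilon_k \lambda_k(z) f_k(z)$ lies in $\mathcal{F}$ and satisfies $F(z_0) = \sum_k \epsilon_k x_k$.

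The rest is routine: apply \eqref{eq:1.3} to $F$, take expectations over the signs, and combine H\"older in the form $\mathbb{E}[A^{1-\theta} B^\theta] \leq (\mathbb{E}A)^{1-\theta}(\mathbb{E}B)^\theta$ with Fubini to reduce to bounding $\int_{\mathbb{S}_j} \mathbb{E}\|F(z)\|_z \, dP^j_{z_0}(z)$. On $\mathbb{S}_j$ the type-$p_j$ hypothesis gives $\mathbb{E}\|F(z)\|_z \leq C(z)\bigl(\sum_k \|\lambda_k(z) f_k(z)\|_z^{p_j}\bigr)^{1/p_j}$, and the weights are calibrated so that $\|\lambda_k(z) f_k(z)\|_z \leq (1+\varepsilon) s \mu_k^{p/p_j}$; using $\sum_k \mu_k^p = 1$, the inner sum collapses to $(1+\varepsilon) s$. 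Letting $\varepsilon \to 0$ yields case (1); case (2) follows by replacing $C(z)$ with $\esssup_{\partial\mathbb{S}} C$ and observing $(\esssup C)^{1-\theta}(\esssup C)^\theta = \esssup C$.

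The main obstacle is the construction of the scalar weights $\lambda_k$: the inequality \eqref{eq:1.3} naturally yields an $L^1$-average on each boundary line, while the type assumption yields $\ell_{p_j}$-sums with different exponents there, so $\lambda_k$ must be a single holomorphic function that interpolates the two exponents, equals $1$ at $z_0$, and has the correct moduli on both $\mathbb{S}_0$ and $\mathbb{S}_1$ simultaneously. Once $\gamma$ and $\lambda_k$ are in hand, the rest is just Fubini, H\"older, and the definition of type.
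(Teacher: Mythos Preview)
Your argument is correct and is the classical holomorphic-reweighting proof; the paper itself does not give a proof but simply points to Proposition~4.4 of \cite{Castillo2014}, and your construction (choose near-optimal $f_k$, multiply by scalar weights $\lambda_k$ to convert the boundary $\ell_{p_j}$-sums into a single $\ell_p$-sum, then apply \eqref{eq:1.3}, H\"older over the signs, and Fubini) is exactly that standard route. One small inaccuracy: $\gamma$ is affine, hence entire but \emph{not} bounded on $\mathbb{S}$ (its imaginary part is unbounded); what you actually use, and what is true, is that $\mathrm{Re}\,\gamma$ is bounded on $\overline{\mathbb{S}}$, so that for $0<\mu_k\le 1$ the function $\lambda_k(z)=\mu_k^{\gamma(z)-\gamma(z_0)}$ is bounded analytic on the strip and therefore multiplication by $\lambda_k$ preserves $\mathcal{F}$.
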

\begin{proof}
The proof is similar to that of Proposition 4.4 of \cite{Castillo2014}, one just has to consider general functions in $\mathcal{F}$ instead of minimal functions, and take the appropriate infimum.
\end{proof}

Our goal is to give conditions on the types of the spaces under consideration ensuring that $dX_{z_0}$ is nontrivial, or even singular. We start noticing that if we consider in $\mathcal{F}$ the norm
\[
\vertiii{F} = {\int\limits_{\partial\mathbb{S}}} \left\|F(z)\right\|_{z} dP_{z_0}(z)
\]

\noindent then by \eqref{eq:1.2} $\|x\|_{z_0} = \inf\{\vertiii{F} : F \in\mathcal{F}, F(z_0) = x\}$, for all $x \in X_{z_0}$. It is also easy to see that $\vertiii{F} \leq \|F\|_{\mathcal{F}}$, for all $F \in \mathcal{F}$, and by the same reasoning of \hyperref[pro:1.6]{Proposition 1.6} one has:
\begin{lemma}\label{lem:2.3}
$\delta'_{z_0} : (\ker\delta_{z_0}, \vertiii{.}) \rightarrow X_{z_0}$ is bounded.
\end{lemma}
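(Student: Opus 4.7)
The plan is to mimic the proof of Proposition 1.6, replacing the ambient norm $\|\cdot\|_\mathcal{F}$ by the weaker integrated norm $\vertiii{\cdot}$, and exploiting the fact that the conformal map $\psi : \mathbb{S} \to \mathbb{D}$ has modulus one on $\partial \mathbb{S}$.

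Fix $f \in \ker \delta_{z_0}$, so $f \in \mathcal{F}$ and $f(z_0) = 0$. By \hyperref[pro:1.5]{Proposition 1.5} the function $g = |\psi'(z_0)| \, f/\psi$ belongs to $\mathcal{F}$. Since $f(z_0) = 0$, a L'H\^opital computation gives
\[
g(z_0) = |\psi'(z_0)|\, \frac{f'(z_0)}{\psi'(z_0)},
\]
a vector whose $X_{z_0}$-norm equals $\|f'(z_0)\|_{z_0}$. Applying the infimum formula \eqref{eq:1.2} to $g$ yields
\[
\|f'(z_0)\|_{z_0} = \|g(z_0)\|_{z_0} \leq \vertiii{g}.
\]

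Now I would exploit that $\psi$ sends $\partial\mathbb{S}$ onto the unit circle off a set of measure zero (the two prime ends mapped to $P_1, P_2$), so $|\psi(z)| = 1$ for $dP_{z_0}$-a.e. $z \in \partial \mathbb{S}$. Therefore
\[
\vertiii{g} = \int_{\partial \mathbb{S}} \left\| \tfrac{|\psi'(z_0)|}{\psi(z)} f(z) \right\|_z dP_{z_0}(z) = |\psi'(z_0)| \int_{\partial \mathbb{S}} \|f(z)\|_z \, dP_{z_0}(z) = |\psi'(z_0)|\, \vertiii{f}.
\]
Combining the two displayed inequalities gives $\|\delta'_{z_0} f\|_{z_0} \leq |\psi'(z_0)|\, \vertiii{f}$, which is the desired boundedness, with operator norm at most $|\psi'(z_0)|$.

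There is really no serious obstacle here: \hyperref[pro:1.5]{Proposition 1.5} does all the structural work by guaranteeing $f/\psi \in \mathcal{F}$, and the remaining content is just the boundary identity $|\psi| \equiv 1$ a.e.\ on $\partial\mathbb{S}$, which is standard for conformal maps between the strip and the disk. The only point worth flagging is that one must use the integrated norm identity \eqref{eq:1.2}, rather than the supremum norm $\|\cdot\|_\mathcal{F}$, in order to pass from a bound on $\vertiii{g}$ to a bound on $\|g(z_0)\|_{z_0}$; this is precisely what makes the lemma a genuine strengthening of \hyperref[pro:1.6]{Proposition 1.6} and what will be needed in the subsequent type estimates.
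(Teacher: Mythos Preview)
Your proof is correct and follows exactly the approach the paper has in mind: the paper only says ``by the same reasoning of \hyperref[pro:1.6]{Proposition 1.6}'', and you have unpacked this correctly, making explicit the two ingredients that change when passing from $\|\cdot\|_{\mathcal{F}}$ to $\vertiii{\cdot}$ --- namely, that $|\psi|=1$ a.e.\ on $\partial\mathbb{S}$ so dividing by $\psi$ preserves $\vertiii{\cdot}$, and that \eqref{eq:1.2} gives $\|g(z_0)\|_{z_0}\le\vertiii{g}$ for any $g\in\mathcal{F}$. If anything, you are slightly more careful than the paper's Proposition~1.6 in noting that $g(z_0)=|\psi'(z_0)|\,f'(z_0)/\psi'(z_0)$ rather than literally $f'(z_0)$, and then observing that the unimodular factor is harmless for the norm.
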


Besides, since $\vertiii{F} \leq \|F\|$ on $\mathcal{F}$, we have that $B_{z_0} : X_{z_0} \rightarrow (\mathcal{F}, \vertiii{.})$ is bounded. This allows us to work with the space $(\mathcal{F}, \vertiii{.})$ instead of $(\mathcal{F}, \|.\|_{\mathcal{F}})$, which in turn will be useful in order to obtain some desired estimates.

We record for future use the following fact that follows almost by definition:
\begin{lemma}\label{lem:2.4}
If $W$ is a subspace of $X_{z_0}$ such that the twisted sum induced by $W$ with respect to $dX_{z_0}$ is trivial, then there is a linear function $\Lambda : W \rightarrow \mathcal{U}$ such that $\Omega_{z_0} |_{W} - \Lambda : W \rightarrow X_{z_0}$ is bounded.
\end{lemma}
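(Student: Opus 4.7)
The plan is to unwind the definitions: the hypothesis gives triviality relative to the quasi-linear map $\delta'_{z_0}\omega_{z_0}$ that defines $dX_{z_0}$ as a pushout, while the conclusion is phrased in terms of $\Omega_{z_0} = \delta'_{z_0}B_{z_0}$. These two maps differ by the linear map $\delta'_{z_0}L_{z_0}$, so the two formulations of triviality should be equivalent modulo adjusting the linear witness by this piece.

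More concretely, I would first recall that $dX_{z_0}$ is equivalent (through the pushout construction described before the statement) to the twisted sum defined by the quasi-linear map $F := \delta'_{z_0}\omega_{z_0} : X_{z_0} \to X_{z_0}$, where $\omega_{z_0} = B_{z_0} - L_{z_0}$. Hence the twisted sum induced on $W$ is equivalent to the one given by $F|_W$, and the triviality hypothesis translates (via the standard characterization of trivial quasi-linear maps recalled in Section~1.1) into the existence of a linear map $A : W \to X_{z_0}$ with
\[
\sup_{\|w\|_{z_0} \le 1} \bigl\| F(w) - A(w) \bigr\|_{z_0} < \infty.
\]

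Second, I would set $\Lambda := A + \delta'_{z_0} L_{z_0}|_W$. Since $L_{z_0} : X_{z_0} \to \mathcal{F}$ is linear (as a linear selection for $\delta_{z_0}$) and $\delta'_{z_0} : \mathcal{F} \to \mathcal{U}$ is linear, the composition $\delta'_{z_0} L_{z_0} : X_{z_0} \to \mathcal{U}$ is linear, so $\Lambda : W \to \mathcal{U}$ is a well-defined linear map. A direct computation then gives
\[
\Omega_{z_0}|_W - \Lambda \;=\; \delta'_{z_0} B_{z_0}|_W - \delta'_{z_0} L_{z_0}|_W - A \;=\; \delta'_{z_0}\omega_{z_0}|_W - A \;=\; F|_W - A,
\]
which takes values in $X_{z_0}$ and is bounded as a map $W \to X_{z_0}$ by the triviality hypothesis.

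There is no real obstacle here; the only point that deserves attention is to keep track of which maps land in $\mathcal{U}$ and which land in $X_{z_0}$, so that $\Lambda$ is correctly seen as a linear $W \to \mathcal{U}$ map and the difference $\Omega_{z_0}|_W - \Lambda$ is verified to land in the smaller space $X_{z_0}$ (this is where the cancellation of $\delta'_{z_0} L_{z_0}$ is essential, since $\Omega_{z_0}$ itself is only $\mathcal{U}$-valued a priori).
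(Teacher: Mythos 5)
Your proof is correct and follows the route the paper clearly has in mind (the lemma is stated as following ``almost by definition'' with no written proof): since $\Omega_{z_0}$ and the pushout's quasi-linear map $\delta'_{z_0}\omega_{z_0}$ differ by the linear, $\mathcal{U}$-valued map $\delta'_{z_0}L_{z_0}$, one simply absorbs that difference into the linear witness. Your careful tracking of which maps land in $\mathcal{U}$ versus $X_{z_0}$ is exactly the point that needs attention, and you handle it properly.
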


To obtain their results on singularity, the authors of \cite{Castillo01} define estimators related to the structure of the spaces in the interpolation scheme. For example, for a K\"othe function space $X$, they define the estimator
\begin{equation*}
M_X(n) = \sup\{\|x_1 + ... + x_n\|\}
\end{equation*}
\noindent where the supremum is taken over $x_1, ..., x_n$ in the unit ball of $X$ with disjoint supports.

The following is the estimator we use:

\begin{definition}\label{def:2.5}
Let $X$ be a Banach space and $n$ a natural number. We define:
\begin{eqnarray*}
\beta_n(X) & = & \inf\Bigg\{\beta \text{ such that } \mathbb{E}\Big\|\sum\limits_{i=1}^n \epsilon_i x_i\Big\| \leq \beta \text{ , } \forall x_i \in B_{X}\Bigg\}
\end{eqnarray*}
where $B_X = \{x \in X : \|x\| \leq 1\}$.
\end{definition}

Note that $\beta_1(X) = 1$ and $\beta_{n+1}(X) \geq \beta_{n}(X)$ for all $n$.

It is easy to see that
\begin{lemma}\label{lem:2.6}
 Let $X$ be a Banach space and $x_1, ..., x_n \in X$. Then:
 \begin{eqnarray*}
  \mathbb{E}\Big\|\sum\limits_{i=1}^n \epsilon_i x_i\Big\| \leq \beta_n(X) \sup\|x_i\|
 \end{eqnarray*}
\end{lemma}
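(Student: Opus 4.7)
The plan is to reduce to the definition by homogeneity. Let $M = \sup_i \|x_i\|$. If $M = 0$ then both sides vanish and the inequality is trivial, so we may assume $M > 0$.

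Set $y_i = x_i / M$ for each $i$. Then $\|y_i\| \leq 1$, so $y_i \in B_X$ for every $i$. By the very definition of $\beta_n(X)$, we have
\[
\mathbb{E}\Big\|\sum_{i=1}^n \epsilon_i y_i\Big\| \leq \beta_n(X).
\]
Now I would use positive homogeneity of the norm together with linearity of expectation: since $\sum_i \epsilon_i x_i = M \sum_i \epsilon_i y_i$ for every choice of signs, we obtain
\[
\mathbb{E}\Big\|\sum_{i=1}^n \epsilon_i x_i\Big\| = M \cdot \mathbb{E}\Big\|\sum_{i=1}^n \epsilon_i y_i\Big\| \leq M \beta_n(X) = \beta_n(X) \sup_i \|x_i\|,
\]
which is the claimed bound.

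There is really no obstacle here: the whole content of the lemma is the observation that the definition of $\beta_n(X)$, although phrased only for vectors in the unit ball, scales to arbitrary vectors because both sides of the estimate are positively homogeneous of the same degree in the $x_i$. The lemma is essentially a restatement of \hyperref[def:2.5]{Definition 2.5} in a form convenient for later use.
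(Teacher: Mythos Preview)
Your proof is correct and is exactly the natural argument; the paper in fact omits the proof entirely, merely stating that the lemma is ``easy to see,'' so your homogeneity reduction to \hyperref[def:2.5]{Definition 2.5} is precisely what was intended.
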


\begin{lemma}\label{lem:2.7}
Suppose that an infinite dimensional Banach space $X$ has type $p$ and that $p_X = p$. Then for every $n \geq 1$ we have $n^{\frac{1}{p}} \leq \beta_n(X) \leq K_X n^{\frac{1}{p}}$, where $K_X$ is the best constant for the type $p$ of $X$.
\end{lemma}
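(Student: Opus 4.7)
The upper bound is essentially a direct unpacking of the definition of type. Given any $x_1,\dots,x_n \in B_X$, applying the type $p$ inequality with best constant $K_X$ yields
\[
\mathbb{E}\Big\|\sum_{i=1}^n \epsilon_i x_i\Big\| \leq K_X \Big(\sum_{i=1}^n \|x_i\|^p\Big)^{1/p} \leq K_X n^{1/p},
\]
so that $\beta_n(X)\leq K_X n^{1/p}$ upon taking the supremum over the unit ball.

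For the lower bound, I plan to invoke the Maurey--Pisier theorem, which says that for an infinite dimensional Banach space $X$ the space $\ell_{p_X}$ is finitely representable in $X$. Since by hypothesis $X$ is infinite dimensional and $p_X = p$, this means that for every $\epsilon>0$ and every $n\geq 1$ there exist $x_1,\dots,x_n \in X$ such that
\[
(1-\epsilon)\Big(\sum_{i=1}^n |a_i|^p\Big)^{1/p} \leq \Big\|\sum_{i=1}^n a_i x_i\Big\| \leq (1+\epsilon)\Big(\sum_{i=1}^n |a_i|^p\Big)^{1/p}
\]
for all scalars $a_i$. Testing with $a_i=\pm 1$ gives $\|\sum \epsilon_i x_i\|\geq (1-\epsilon)n^{1/p}$ deterministically in the signs, while $\|x_i\|\leq 1+\epsilon$. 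Setting $y_i = x_i/(1+\epsilon) \in B_X$ and taking expectation over signs produces
\[
\mathbb{E}\Big\|\sum_{i=1}^n \epsilon_i y_i\Big\| \geq \frac{1-\epsilon}{1+\epsilon}\, n^{1/p},
\]
so that $\beta_n(X) \geq \frac{1-\epsilon}{1+\epsilon}\, n^{1/p}$. Letting $\epsilon\to 0$ finishes the proof.

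The only nontrivial ingredient is the appeal to Maurey--Pisier; the rest is direct from the definitions. The conceptual point worth emphasising is that the assumption $p_X=p$ (rather than merely ``$X$ has type $p$'') is precisely what guarantees the supremum of types is attained by a finitely representable $\ell_p$, which is what forces the lower Rademacher average to match the upper one at the sharp rate $n^{1/p}$.
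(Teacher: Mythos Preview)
Your proof is correct and follows essentially the same route as the paper: the upper bound is the direct type inequality, and the lower bound is Maurey--Pisier applied at $p=p_X$ together with a limit as $\epsilon\to 0$. The only cosmetic difference is that the paper normalises the Maurey--Pisier vectors so that the upper constant is $1$ (hence $x_i\in B_X$ immediately) while you use the symmetric $(1-\epsilon,1+\epsilon)$ form and rescale by $1/(1+\epsilon)$; this changes nothing substantive.
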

\begin{proof}
Fix $n$ and let $x_1, ..., x_n$ be any vectors in $B_X$, and $K$ a constant for the type $p$ of $X$. We have:
\begin{eqnarray*}
\mathbb{E}\Big\|\sum\limits_{i=1}^n \epsilon_i x_i\Big\| & \leq & K \Bigg(\sum\limits_{i=1}^n \|x_i\|^p\Bigg)^{\frac{1}{p}} \\
                    & \leq & K n^{\frac{1}{p}}
\end{eqnarray*}

So $\beta_n(X) \leq K_X n^{\frac{1}{p}}$.
On the other side, by Maurey-Pisier's theorem (see \cite{Milman01}, for example), given $\epsilon > 0$, there are vectors $x_1, ..., x_n$ of norm at most $1$ in $X$ such that, for every choice of (real) scalars $a_i$, $i = 1, ..., n$, we have
\begin{equation*}
\frac{1}{1 + \epsilon}\Bigg(\sum\limits_{i=1}^n |a_i|^p\Bigg)^{\frac{1}{p}} \leq \left\|\sum\limits_{i=1}^n a_i x_i\right\| \leq \Bigg(\sum\limits_{i=1}^n |a_i|^p\Bigg)^{\frac{1}{p}}
\end{equation*}

In particular,
\begin{equation*}
\frac{1}{1 + \epsilon} n^{\frac{1}{p}} \leq \mathbb{E}\Big\|\sum\limits_{i=1}^n \epsilon_i x_i\Big\|
\end{equation*}

Since $\epsilon$ was arbitrary, $n^{\frac{1}{p}} \leq \beta_n(X)$.
\end{proof}

\begin{lemma}\label{lem:2.8} Suppose that $\Omega : X \rightarrow X$ is a quasi-linear map that is trivial on a closed subspace $W \subset X$. Then there is $K_1 > 0$ such that for every sequence $(w_i)_{i=1}^n$ in $B_W$,
\begin{equation*}
\mathbb{E}\Bigg\|\Omega\Bigg(\sum\limits_{i=1}^n \epsilon_i w_i\Bigg) - \sum\limits_{i=1}^n \epsilon_i\Omega(w_i)\Bigg\| \leq K_1 \beta_n(X)
\end{equation*}
\end{lemma}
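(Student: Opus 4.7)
The plan is to unpack triviality of $\Omega$ on $W$ and reduce everything to two Rademacher averages to which the definition of $\beta_n(X)$ applies directly. By the notion of triviality of a quasi-linear map recalled in Section 1.1 (and consistent with Lemma 2.4), the hypothesis gives a linear map $L : W \to X$ and a constant $C > 0$ such that $\|\Omega(w) - L(w)\| \leq C$ for every $w \in B_W$; by homogeneity, $\|\Omega(w) - L(w)\| \leq C \|w\|$ for all $w \in W$.

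Next, I would use the linearity of $L$ to rewrite, for any signs $\epsilon_i = \pm 1$ and any $w_1, \dots, w_n \in B_W$,
\[
\Omega\Big(\sum_{i=1}^n \epsilon_i w_i\Big) - \sum_{i=1}^n \epsilon_i \Omega(w_i) = \Big[\Omega\Big(\sum_{i=1}^n \epsilon_i w_i\Big) - L\Big(\sum_{i=1}^n \epsilon_i w_i\Big)\Big] + \sum_{i=1}^n \epsilon_i \big(L(w_i) - \Omega(w_i)\big),
\]
and then apply the triangle inequality followed by the expectation over signs. The first bracket is bounded pointwise by $C\|\sum_i \epsilon_i w_i\|$, so its expectation is at most $C \cdot \mathbb{E}\|\sum_i \epsilon_i w_i\| \leq C\,\beta_n(X)$ by Lemma 2.6, since each $w_i \in B_W \subset B_X$.

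For the second term, the vectors $L(w_i) - \Omega(w_i)$ belong to $X$ and have norm at most $C$, so the normalized vectors $C^{-1}(L(w_i) - \Omega(w_i))$ lie in $B_X$; a second application of Lemma 2.6 gives
\[
\mathbb{E}\Big\|\sum_{i=1}^n \epsilon_i \big(L(w_i) - \Omega(w_i)\big)\Big\| \leq C\,\beta_n(X).
\]
Adding the two contributions yields the claim with $K_1 = 2C$. I do not foresee a genuine obstacle: the estimator $\beta_n(X)$ and the reduction of triviality to a linear approximation by $L$ were introduced precisely to make this decomposition work, and homogeneity of $\Omega$ and $L$ absorbs any scaling issue for vectors in $B_W$ rather than in the whole of $W$.
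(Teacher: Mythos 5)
Your proof is correct and uses essentially the same argument as the paper: both decompose $\Omega(\sum\epsilon_i w_i) - \sum\epsilon_i\Omega(w_i)$ by inserting and subtracting the linear map witnessing triviality, then bound the two resulting Rademacher averages by $\beta_n(X)$ via Lemma 2.6. Your constant $K_1 = 2C$ matches the paper's $2\|\Omega - \Lambda\|$.
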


\begin{proof}
Since $\Omega{\restriction_{W}}$ is trivial, there is a linear map $\Lambda : W \rightarrow X$ with $\Omega{\restriction_{W}} - \Lambda$ bounded. Then:
\begin{eqnarray*}
&\Bigg\|\Omega\Bigg(\sum\limits_{i=1}^n \epsilon_i w_i\Bigg) - \sum\limits_{i=1}^n \epsilon_i\Omega(w_i)\Bigg\|  \leq& \\  &\Bigg\|\Omega\Bigg(\sum\limits_{i=1}^n \epsilon_i w_i\Bigg) - \Lambda\Bigg(\sum\limits_{i=1}^n \epsilon_i w_i\Bigg)\Bigg\| + \Bigg\|\sum\limits_{i=1}^n \epsilon_i(\Lambda - \Omega)(w_i)\Bigg\|&
\end{eqnarray*}
\vspace{0.2cm}

So:
\begin{eqnarray*}
\mathbb{E}\Bigg\|\Omega\Bigg(\sum\limits_{i=1}^n \epsilon_i w_i\Bigg) - \sum\limits_{i=1}^n \epsilon_i\Omega(w_i)\Bigg\| & \leq & \|\Omega - \Lambda\| \mathbb{E}\Big\|\sum\limits_{i=1}^n \epsilon_i w_i\Big\| + \mathbb{E}\Big\|\sum\limits_{i=1}^n \epsilon_i (\Lambda - \Omega)(w_i)\Big\| \\
      & \leq & \|\Omega - \Lambda\|\beta_n(X) + \beta_n(X) \sup\|(\Omega - \Lambda)(w_i)\| \\
      & = & 2\|\Omega - \Lambda\| \beta_n(X)
\end{eqnarray*}
\end{proof}

\begin{rmk}
Note that in the setting we work, that is, of twisted sums induced by interpolation, $\Lambda$ does not have its image in $X_{z_0}$, but $\Omega_{z_0} - \Lambda$ does (\hyperref[lem:2.4]{Lemma 2.4}), and thus the result is still valid in this context.
\end{rmk}

\begin{definition}\label{def:2.9}
We define $\beta_{n, j} = \esssup_{z \in \mathbb{S}_j} \beta_n(X_z)$, whenever it makes sense.
\end{definition}

\begin{lemma}\label{lem:2.10}
Let $\{(X_z)_{z \in \partial\mathbb{S}}, X\}$ be an interpolation family for which 

\noindent $\esssup_{z \in \mathbb{S}_j} \beta_n(X_z)$ is finite, $j = 0, 1$. Then $\beta_n(X_{z_0}) \leq \beta_{n, 0}^{1 - \theta} \beta_{n, 1}^{\theta}$.
\end{lemma}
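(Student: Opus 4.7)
The plan is to mimic the proof strategy of Proposition 2.2, using the multiplicative form \eqref{eq:1.3} of the interpolation inequality together with a H\"older argument to absorb the expectation over the signs.

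Given $x_1,\dots,x_n\in B_{X_{z_0}}$ and $\varepsilon>0$, I would first use the definition of $\|\cdot\|_{z_0}$ to pick functions $f_i\in\mathcal{F}$ with $f_i(z_0)=x_i$ and $\esssup_{z\in\partial\mathbb{S}}\|f_i(z)\|_z\le 1+\varepsilon$. For each choice of signs $\epsilon=(\epsilon_1,\dots,\epsilon_n)\in\{\pm 1\}^n$, set $F_{\epsilon}=\sum_i\epsilon_i f_i\in\mathcal{F}$, so that $F_{\epsilon}(z_0)=\sum_i\epsilon_i x_i$. Applying \eqref{eq:1.3} to $F_{\epsilon}$,
$$\Big\|\sum_i\epsilon_i x_i\Big\|_{z_0}\le\Big(\int_{\mathbb{S}_0}\|F_{\epsilon}(z)\|_z\,dP^0_{z_0}(z)\Big)^{1-\theta}\Big(\int_{\mathbb{S}_1}\|F_{\epsilon}(z)\|_z\,dP^1_{z_0}(z)\Big)^{\theta}.$$

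Next, I would take the expectation over $\epsilon$ and apply H\"older's inequality with conjugate exponents $\tfrac{1}{1-\theta}$ and $\tfrac{1}{\theta}$, together with Fubini, to obtain
$$\mathbb{E}\Big\|\sum_i\epsilon_i x_i\Big\|_{z_0}\le\Big(\int_{\mathbb{S}_0}\mathbb{E}\|F_{\epsilon}(z)\|_z\,dP^0_{z_0}(z)\Big)^{1-\theta}\Big(\int_{\mathbb{S}_1}\mathbb{E}\|F_{\epsilon}(z)\|_z\,dP^1_{z_0}(z)\Big)^{\theta}.$$
For a.e.\ $z\in\mathbb{S}_j$, the vectors $f_i(z)/(1+\varepsilon)$ lie in $B_{X_z}$, so by \hyperref[lem:2.6]{Lemma 2.6} and \hyperref[def:2.9]{Definition 2.9}, $\mathbb{E}\|F_{\epsilon}(z)\|_z\le(1+\varepsilon)\beta_n(X_z)\le(1+\varepsilon)\beta_{n,j}$. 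Plugging back in and letting $\varepsilon\to 0$ yields $\mathbb{E}\|\sum_i\epsilon_i x_i\|_{z_0}\le\beta_{n,0}^{1-\theta}\beta_{n,1}^{\theta}$ uniformly for $x_i\in B_{X_{z_0}}$, which is the claimed bound on $\beta_n(X_{z_0})$.

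The main obstacle is the H\"older step: one needs to justify pulling the expectation inside the geometric mean of the two strip integrals and then exchanging it with the integrals. This is the only nonroutine piece, and it works because $1/(1-\theta)$ and $1/\theta$ are conjugate exponents and $\|F_{\epsilon}(z)\|_z$ is a jointly measurable, essentially bounded function of $(z,\epsilon)$ (the measurability in $z$ follows from $F_{\epsilon}\in\mathcal{F}$ and the interpolation family axioms, while boundedness in $z$ is ensured by the choice of the $f_i$). Everything else is bookkeeping with the definitions.
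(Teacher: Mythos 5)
Your proof is correct and follows essentially the same route as the paper: apply inequality \eqref{eq:1.3} to $F_{\epsilon}=\sum_i\epsilon_i f_i$, average over signs, invoke H\"older with exponents $\tfrac{1}{1-\theta}$, $\tfrac{1}{\theta}$ and interchange expectation with the boundary integrals, then bound the inner expectation via Lemma~\ref{lem:2.6}. The only (cosmetic) difference is that you fix $\esssup_z\|f_i(z)\|_z\le 1+\varepsilon$ up front and let $\varepsilon\to 0$ at the end, whereas the paper keeps the $F_j$ arbitrary and absorbs $\sup_j\|F_j\|_{\mathcal F}$ into the final infimum; and your worry about measurability/Fubini is superfluous since the expectation over $\{\pm1\}^n$ is a finite average.
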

\begin{proof}
 Let $x_1, ..., x_n \in B_{X_{z_0}}$, and take $F_j \in \mathcal{F}$ such that $F_j(z_0) = x_j$, $j = 1, ..., n$.
 
Then, for every choice of signs $\epsilon_j$, by \eqref{eq:1.3} we have:
\begin{eqnarray*}
&\left\|\sum\limits_{j=1}^n \epsilon_j x_j\right\|_{z_0} & \\
&\leq \Bigg( \bigints\limits_{\mathbb{S}_0} \left\|\sum\limits_{j=1}^n \epsilon_j F_j(z)\right\|_{z} dP^0_{z_0} (z) \Bigg)^{1 - \theta} \Bigg( {\bigints\limits_{\mathbb{S}_1}} \left\|\sum\limits_{j=1}^n \epsilon_j F_j(z)\right\|_{z} dP^1_{z_0} (z) \Bigg)^{\theta}&
\end{eqnarray*}

Then
\begin{eqnarray*}
&\sum_{\substack{+ \\ -}}\left\|\sum\limits_{j=1}^n \epsilon_j x_j\right\|_{z_0} & \\
& \leq \sum_{\substack{+ \\ -}} \Bigg( {\bigints\limits_{\mathbb{S}_0}} \left\|\sum\limits_{j=1}^n \epsilon_j F_j(z)\right\|_{z} dP^0_{z_0} (z) \Bigg)^{1 - \theta} \Bigg( {\bigints\limits_{\mathbb{S}_1}} \left\|\sum\limits_{j=1}^n \epsilon_j F_j(z)\right\|_{z} dP^1_{z_0} (z) \Bigg)^{\theta}&
\end{eqnarray*}

Using H\"{o}lder's inequality with $\frac{1}{\frac{1}{1-\theta}} + \frac{1}{\frac{1}{\theta}} = 1$, we have:
\begin{eqnarray*}
&\sum_{\substack{+ \\ -}}\left\|\sum\limits_{j=1}^n \epsilon_j x_j\right\|_{z_0}   & \\
\leq &\Bigg(\sum_{\substack{+ \\ -}} {\bigints\limits_{\mathbb{S}_0}} \left\|\sum\limits_{j=1}^n \epsilon_j F_j(z)\right\|_{z} dP^0_{z_0} (z) \Bigg)^{1 - \theta} \Bigg(\sum_{\substack{+ \\ -}} {\bigints\limits_{\mathbb{S}_1}} \left\|\sum\limits_{j=1}^n \epsilon_j F_j(z)\right\|_{z} dP^1_{z_0} (z) \Bigg)^{\theta} &\\
		= & \Bigg( {\bigints\limits_{\mathbb{S}_0}} \sum_{\substack{+ \\ -}}\left\|\sum\limits_{j=1}^n \epsilon_j F_j(z)\right\|_{z} dP^0_{z_0} (z) \Bigg)^{1 - \theta} \Bigg( {\bigints\limits_{\mathbb{S}_1}}\sum_{\substack{+ \\ -}} \left\|\sum\limits_{j=1}^n \epsilon_j F_j(z)\right\|_{z} dP^1_{z_0} (z) \Bigg)^{\theta}&
\end{eqnarray*}

Then: 
\begin{eqnarray*}
& \mathbb{E}\left\|\sum\limits_{j=1}^n \epsilon_j x_j\right\|_{z_0}  &\\
 &\leq \Bigg( {\bigints\limits_{\mathbb{S}_0}} \mathbb{E}\left\|\sum\limits_{j=1}^n \epsilon_j F_j(z)\right\|_{z} dP_{0, \theta} (z) \Bigg)^{1 - \theta} \Bigg( {\bigints\limits_{\mathbb{S}_1}}\mathbb{E} \left\|\sum\limits_{j=1}^n \epsilon_j F_j(z)\right\|_{z} dP^1_{z_0} (z) \Bigg)^{\theta} &  \\
								       &\leq \Bigg( {\bigints\limits_{\mathbb{S}_0}} \beta_n(X_z) \sup\limits_j \|F_j(z)\|_{z} dP^0_{z_0} (z) \Bigg)^{1 - \theta} \Bigg( {\bigints\limits_{\mathbb{S}_1}}\beta_n(X_z) \sup\limits_j \|F_j(z)\|_{z} dP^1_{z_0} (z) \Bigg)^{\theta} & \\
								        & \leq \beta_{n, 0}^{1 - \theta} \beta_{n, 1}^{\theta} \sup\|F_j\|_{\mathcal{F}}&
\end{eqnarray*}
Since the $F_j$ were arbitrary, we can take $\sup\|F_j\|_{\mathcal{F}}$ as close to $\sup\|x_j\|_{z_0}$ as we wish, and we have the result.
\end{proof}

\begin{lemma}\label{lem:2.11}
 Let $\{(X_z)_{z \in \partial\mathbb{S}}, X\}$ be an interpolation family such that the spaces $X_z$, for a.\ e.\ $z \in \mathbb{S}_j$, have type $p_j$ with uniformly bounded constants and that $p_{X_z} = p_j$, $j = 0, 1$. There is $K_2 > 0$ such that, if $x_1, ..., x_n \in B_{X_{z_0}}$
 \begin{eqnarray*}
  \mathbb{E}\Bigg\|\Bigg(\sum\limits_{j=1}^n \epsilon_i x_j\Bigg)\log\frac{\beta_{n, 0}}{\beta_{n, 1}} - \Omega_{z_0} \Bigg(\sum\limits_{j=1}^n \epsilon_j x_j\Bigg) + \sum\limits_{j=1}^n \Omega_{z_0}(\epsilon_j x_j)\Bigg\| \leq K_2 \beta_{n, 0}^{1-\theta}\beta_{n, 1}^\theta
 \end{eqnarray*}
\end{lemma}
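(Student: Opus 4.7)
The strategy is to realize the three-term vector inside the norm as the value of $\delta'_{z_0}$ applied to a carefully chosen element of $\ker\delta_{z_0}$, and then invoke \hyperref[lem:2.3]{Lemma 2.3}. The factor $\log(\beta_{n,0}/\beta_{n,1})$ is the signature of an exponential analytic multiplier whose modulus rebalances Rademacher averages between $\mathbb{S}_0$ and $\mathbb{S}_1$; the target bound $\beta_{n,0}^{1-\theta}\beta_{n,1}^\theta$ is precisely the $L^1(dP_{z_0})$-geometric mean such a multiplier achieves.

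Concretely, set $\alpha = \beta_{n,0}^{1-\theta}\beta_{n,1}^{\theta}$ and $c_j = \alpha/\beta_{n,j}$ for $j = 0, 1$, and define
\begin{equation*}
\phi(z) = \frac{c_0^{1-z}c_1^z}{c_0^{1-z_0}c_1^{z_0}}.
\end{equation*}
Then $\phi \in H^\infty(\mathbb{S}) \subset N^+(\mathbb{S})$, $|\phi(z)| = c_j$ for $z \in \mathbb{S}_j$, $\phi(z_0) = 1$ and $\phi'(z_0) = \log(c_1/c_0) = \log(\beta_{n,0}/\beta_{n,1})$. Put $F_j = B_{z_0}(x_j) \in \mathcal{F}$, $G = \sum_j \epsilon_j F_j$, and $H = B_{z_0}\left(\sum_j \epsilon_j x_j\right)$; since $\phi$ is bounded analytic, $\phi G \in \mathcal{F}$. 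Then $\phi G - H \in \ker\delta_{z_0}$ and Leibniz gives
\begin{equation*}
(\phi G - H)'(z_0) = \log\frac{\beta_{n,0}}{\beta_{n,1}}\sum_j \epsilon_j x_j + \sum_j \epsilon_j \Omega_{z_0}(x_j) - \Omega_{z_0}\left(\sum_j \epsilon_j x_j\right),
\end{equation*}
which is exactly the vector being estimated (using homogeneity $\Omega_{z_0}(\epsilon_j x_j) = \epsilon_j \Omega_{z_0}(x_j)$). By \hyperref[lem:2.3]{Lemma 2.3}, its norm in $X_{z_0}$ is controlled by a constant multiple of $\vertiii{\phi G - H} \leq \vertiii{\phi G} + \vertiii{H}$.

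For $\mathbb{E}\vertiii{H}$, the boundedness of $B_{z_0} : X_{z_0} \to (\mathcal{F}, \|\cdot\|_\mathcal{F})$ together with $\vertiii{\cdot} \leq \|\cdot\|_\mathcal{F}$ and \hyperref[lem:2.10]{Lemma 2.10} yield $\mathbb{E}\vertiii{H} \leq \|B_{z_0}\|\,\mathbb{E}\|\sum_j \epsilon_j x_j\|_{z_0} \leq \|B_{z_0}\|\beta_n(X_{z_0}) \leq \|B_{z_0}\|\alpha$. For $\mathbb{E}\vertiii{\phi G}$, apply Fubini, split the integral over $\mathbb{S}_0$ and $\mathbb{S}_1$ (where $|\phi|$ is the constant $c_j$), apply \hyperref[lem:2.6]{Lemma 2.6} pointwise, and use the uniform type hypothesis $\beta_n(X_z) \leq \beta_{n,j}$ a.e.\ on $\mathbb{S}_j$ together with $\sup_i \|F_i(z)\|_z \leq \|B_{z_0}\|$:
\begin{equation*}
\mathbb{E}\vertiii{\phi G} \leq \sum_{j=0,1} c_j \int_{\mathbb{S}_j} \beta_n(X_z) \sup_i \|F_i(z)\|_z\, dP_{z_0}(z) \leq \|B_{z_0}\|\left[c_0\beta_{n,0}(1-\theta) + c_1\beta_{n,1}\theta\right] = \|B_{z_0}\|\alpha,
\end{equation*}
where the last equality uses $c_j \beta_{n,j} = \alpha$ and $\int_{\mathbb{S}_j} dP_{z_0} \in \{1-\theta,\theta\}$. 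Combining yields the lemma with $K_2$ a multiple of $\|B_{z_0}\|$ and the norm of $\delta'_{z_0}$ on $(\ker\delta_{z_0}, \vertiii{\cdot})$.

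The one technical check is that multiplication by the bounded analytic $\phi$ preserves $\mathcal{F}$ and commutes with evaluation and differentiation at $z_0$; this follows from $H^\infty(\mathbb{S}) \subset N^+(\mathbb{S})$ together with closure of $N^+(\mathbb{S})$ under multiplication, and from the description of $\mathcal{G}$ as finite sums of $N^+(\mathbb{S})$-functions times vectors in $X$. I expect this to be routine and not the main obstacle; the substantive step is the identification of the exponential multiplier $\phi$ with the correct logarithmic derivative at $z_0$.
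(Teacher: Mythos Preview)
Your proof is correct and is essentially the same as the paper's. Your multiplier $\phi(z)=c_0^{1-z}c_1^{z}/(c_0^{1-z_0}c_1^{z_0})$ satisfies $\phi(z)=\beta_n(z_0)/\beta_n(z)$ with $\beta_n(z)=\beta_{n,0}^{1-z}\beta_{n,1}^{z}$, so your function $\phi G$ is exactly the paper's $\beta_n(z_0)F_{\epsilon_J}$; the rest of the argument (applying \hyperref[lem:2.3]{Lemma 2.3} to $\phi G-H\in\ker\delta_{z_0}$, Fubini, \hyperref[lem:2.6]{Lemma 2.6} pointwise, and \hyperref[lem:2.10]{Lemma 2.10}) matches line for line and yields the same constant $K_2=2\|\delta'_{z_0}\|\,\|B_{z_0}\|$.
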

\begin{proof}
The proof is similar to that of Lemma 4.8 of \cite{Castillo01}.
Define $\beta_n(z) = \beta_{n, 0}^{1-z}\beta_{n, 1}^z$.
Let $x_1, ..., x_n$ be vectors in $B_{X_{z_0}}$, and define for each $\epsilon_J = \{\epsilon_1, ..., \epsilon_n\} \in \{-1, 1\}^n$ the following function:

\begin{equation*}
F_{\epsilon_J}(z) = \frac{\sum\limits_{j=1}^n \epsilon_j B_{z_0}(x_j)(z)}{\beta_n(z)}
\end{equation*}
\vspace{0.2cm}

Then $F_{\epsilon_J} \in \mathcal{F}$ because of \hyperref[lem:2.7]{Lemma 2.7}.

We have
\begin{equation}\label{eq:2.1}
\mathbb{E}\vertiii{F_{\epsilon_J}} \leq  \|B_{z_0}\|
\end{equation}

\noindent where the expected value is taken over all choices of signs $\epsilon_J \in \{-1, 1\}^n$. Indeed:
\begin{eqnarray*}
\mathbb{E}\vertiii{F_{\epsilon_J}} & = & \mathbb{E}{\int\limits_{\mathbb{S}_0}} \left\|F_{\epsilon_J}(z)\right\|_{z} dP_{z_0}(z) + \mathbb{E}{\int\limits_{\mathbb{S}_1}} \left\|F_{\epsilon_J}(z)\right\|_{z} dP_{z_0}(z) \\
        & = & \frac{1}{\beta_{n, 0}}{\bigints\limits_{\mathbb{S}_0}} \mathbb{E}\left\|\sum\limits_{j=1}^n \epsilon_j B_{z_0}(x_j)(z)\right\|_z dP_{z_0}(z) + \\ 
        && + \frac{1}{\beta_{n, 1}}{\bigints\limits_{\mathbb{S}_1}} \mathbb{E}\left\|\sum\limits_{j=1}^n \epsilon_j B_{z_0}(x_j)(z)\right\|_z dP_{z_0}(z) \\       
        & \leq & \|B_{z_0}\|
\end{eqnarray*}

Also:
\begin{eqnarray*}
\beta_n(z_0)F_{\epsilon_J}'(z_0)
        & = & \Bigg(\sum\limits_{j=1}^n \epsilon_i x_j\Bigg)\log\frac{\beta_{n, 0}}{\beta_{n, 1}} + \sum\limits_{j=1}^n \epsilon_j \Omega_{z_0}(x_j)
\end{eqnarray*}

Thus:
\begin{eqnarray*}
& \delta'_{z_0} (\beta_n(z_0)F_{\epsilon_J}) -\Omega_{z_0} \Bigg(\sum\limits_{i=1}^n \epsilon_i x_i\Bigg) = & \\
&\Bigg(\sum\limits_{j=1}^n \epsilon_i x_j\Bigg)\log\frac{\beta_{n, 0}}{\beta_{n, 1}} - \Omega_{z_0} \Bigg(\sum\limits_{j=1}^n \epsilon_j x_j\Bigg) + \sum\limits_{j=1}^n \Omega_{z_0}(\epsilon_j x_j)&
\end{eqnarray*}

Also, $\beta_n(z_0)F_{\epsilon_J} - B_{z_0}\Bigg(\sum\limits_{j=1}^n \epsilon_j x_j\Bigg)$ is an element of $\ker\delta_{z_0}$.
So, by \hyperref[lem:2.3]{Lemma 2.3}:
\begin{eqnarray*}
 && \Bigg\|\Bigg(\sum\limits_{j=1}^n \epsilon_i x_j\Bigg)\log\frac{\beta_{n, 0}}{\beta_{n, 1}} - \Omega_{z_0} \Bigg(\sum\limits_{j=1}^n \epsilon_j x_j\Bigg) + \sum\limits_{j=1}^n \Omega_{z_0}(\epsilon_j x_j)\Bigg\| \\
 & = & \Bigg\|\delta'_{z_0} (\beta_n(z_0)F_{\epsilon_J}) -\Omega_{z_0} \Bigg(\sum\limits_{i=1}^n \epsilon_i x_i\Bigg)\Bigg\| \\
                & \leq &  \|\delta'_{z_0}\|\vertiii{\beta_n(z_0)F_{\epsilon_J} - B_{z_0}\Bigg(\sum\limits_{j=1}^n \epsilon_j x_j\Bigg)} \\
                & \leq & \|\delta'_{z_0}\|\vertiii{\beta_n(z_0)F_{\epsilon_J}} + \|\delta'_{z_0}\|\vertiii{B_{z_0}\Bigg(\sum\limits_{j=1}^n \epsilon_j x_j\Bigg)} \\
                & \leq & \left|\beta_n(z_0)\right| \|\delta'_{z_0}\|\vertiii{F_{\epsilon_{J}}} + \|\delta'_{z_0}\|\|B_{z_0}\|\Bigg\|\sum\limits_{j=1}^n \epsilon_j x_j\Bigg\|
\end{eqnarray*}

By \eqref{eq:2.1} and \hyperref[lem:2.10]{Lemma 2.10}, taking the expected value:
\begin{eqnarray*}
 && \mathbb{E} \Bigg\|\Bigg(\sum\limits_{j=1}^n \epsilon_i x_j\Bigg)\log\frac{\beta_{n, 0}}{\beta_{n, 1}} - \Omega_{z_0} \Bigg(\sum\limits_{j=1}^n \epsilon_j x_j\Bigg) + \sum\limits_{j=1}^n \Omega_{z_0}(\epsilon_j x_j)\Bigg\| \\
 & \leq & \left|\beta_n(z_0)\right| \|\delta'_{z_0}\|\|B_{z_0}\| + \|\delta'_{z_0}\|\|B_{z_0}\| \beta_n(X_{z_0}) \\
	    & \leq & 2 \|\delta'_{z_0}\|\|B_{z_0}\|\left|\beta_n(z_0)\right|
\end{eqnarray*}

Take then $K_2 = 2 \|\delta'_{z_0}\|\|B_{z_0}\|$.
\end{proof}

\begin{lemma}\label{lem:2.12}
Let $\{(X_z)_{z \in \partial\mathbb{S}}, X\}$ be an interpolation family for which the spaces $X_z$, for a.\ e.\ $z \in \mathbb{S}_j$, have type $p_j$ with uniformly bounded constants and satisfy $p_{X_z} = p_j$, $j = 0, 1$ and let $z_0 \in \mathbb{S}$. Suppose that $\Omega_{z_0}{\restriction_{W}}$ is trivial, where $W$ is an infinite dimensional closed subspace of $X_{z_0}$. Then there is $K > 0$ such that for every natural $n$
\begin{eqnarray*}
\Bigg|\log\frac{\beta_{n, 0}}{\beta_{n, 1}}\Bigg| \beta_n(W) \leq K \beta_{n, 0}^{1 - \theta} \beta_{n, 1}^{\theta}
\end{eqnarray*}
\end{lemma}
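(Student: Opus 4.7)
The plan is to combine Lemma 2.8 and Lemma 2.11, applying both with the vectors $w_1,\dots,w_n$ taken from $B_W$, and then use the triangle inequality to isolate the term $|\log(\beta_{n,0}/\beta_{n,1})| \,\mathbb{E}\|\sum \epsilon_j w_j\|$ on the left.

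Concretely, let $w_1,\dots,w_n \in B_W \subset B_{X_{z_0}}$. By Lemma 2.11 applied to these vectors,
\[
\mathbb{E}\Bigl\|\Bigl(\sum_{j=1}^n \epsilon_j w_j\Bigr)\log\tfrac{\beta_{n,0}}{\beta_{n,1}} - \Omega_{z_0}\Bigl(\sum_{j=1}^n \epsilon_j w_j\Bigr) + \sum_{j=1}^n \Omega_{z_0}(\epsilon_j w_j)\Bigr\| \leq K_2\,\beta_{n,0}^{1-\theta}\beta_{n,1}^\theta.
\]
Since $\Omega_{z_0}\!\restriction_W$ is trivial, Lemma 2.8 (together with the remark following it, which ensures the estimate is made with respect to the $X_{z_0}$-norm) yields
\[
\mathbb{E}\Bigl\|\Omega_{z_0}\Bigl(\sum_{j=1}^n \epsilon_j w_j\Bigr) - \sum_{j=1}^n \epsilon_j \Omega_{z_0}(w_j)\Bigr\| \leq K_1\,\beta_n(X_{z_0}).
\]
Adding these two estimates and applying the reverse triangle inequality gives
\[
\Bigl|\log\tfrac{\beta_{n,0}}{\beta_{n,1}}\Bigr|\,\mathbb{E}\Bigl\|\sum_{j=1}^n \epsilon_j w_j\Bigr\| \leq K_2\,\beta_{n,0}^{1-\theta}\beta_{n,1}^\theta + K_1\,\beta_n(X_{z_0}).
\]

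Now I would invoke Lemma 2.10, which bounds $\beta_n(X_{z_0}) \leq \beta_{n,0}^{1-\theta}\beta_{n,1}^\theta$, so the right-hand side is controlled by $(K_1+K_2)\,\beta_{n,0}^{1-\theta}\beta_{n,1}^\theta$. Taking the supremum over all choices of $w_1,\dots,w_n \in B_W$ on the left-hand side produces exactly $\beta_n(W)$ (by the definition of the estimator), giving the desired inequality with $K = K_1 + K_2$.

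There is no real obstacle here, as the lemma is essentially a bookkeeping step: Lemma 2.11 produces the $\log(\beta_{n,0}/\beta_{n,1})$ factor, Lemma 2.8 absorbs the linearization error on $W$, and Lemma 2.10 reconciles $\beta_n(X_{z_0})$ with the product $\beta_{n,0}^{1-\theta}\beta_{n,1}^\theta$. The only minor subtlety worth mentioning is the one highlighted in the remark after Lemma 2.8: the trivializing linear map $\Lambda$ from Lemma 2.4 does not necessarily take values in $X_{z_0}$, but $\Omega_{z_0}-\Lambda$ does, which is what justifies estimating the defect in the $X_{z_0}$-norm and hence applying the $\beta_n(X_{z_0})$ bound rather than some larger quantity.
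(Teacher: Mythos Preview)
Your proposal is correct and follows essentially the same route as the paper's proof: both combine Lemmas 2.8, 2.10, and 2.11 via the triangle inequality, and the paper's argument is just a slightly terser version of what you wrote. Your explicit mention of taking the supremum over $w_1,\dots,w_n\in B_W$ to produce $\beta_n(W)$ and your remark about the role of Lemma 2.4 make the bookkeeping clearer than in the original.
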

\begin{proof}
Since $\Bigg|\log\frac{\beta_{n, 0}}{\beta_{n, 1}}\Bigg| \Bigg\|\sum\limits_{j=1}^n \epsilon_j x_j\Bigg\|$ is bounded above by
\begin{eqnarray*}
 & \Bigg\|\Bigg(\sum\limits_{j=1}^n \epsilon_i x_j\Bigg)\log\frac{\beta_{n, 0}}{\beta_{n, 1}} - \Omega_{\theta} \Bigg(\sum\limits_{j=1}^n \epsilon_j x_j\Bigg) + \sum\limits_{j=1}^n \Omega_{z_0}(\epsilon_j x_j)\Bigg\| +   \\
      & + \Bigg\|\Omega_{z_0}\Bigg(\sum\limits_{j=1}^n \epsilon_j x_j\Bigg) - \sum\limits_{j=1}^n \epsilon_j\Omega_{z_0}(x_j)\Bigg\|&
\end{eqnarray*}

\noindent we have the same inequality for the expected value. Thus, by lemmas \ref{lem:2.8}, \ref{lem:2.10} and \ref{lem:2.11},
\begin{eqnarray*}
 \Bigg|\log\frac{\beta_{n, 0}}{\beta_{n, 1}}\Bigg| \mathbb{E}\Bigg\|\sum\limits_{j=1}^n \epsilon_j x_j\Bigg\| \leq K \beta_{n, 0}^{1 - \theta}\beta_{n, 1}^{\theta}
\end{eqnarray*}
\noindent whenever $x_1, ..., x_n \in B_{W}$.
\end{proof}

\begin{theorem}\label{thm:2.13}
 Let $\{(X_z)_{z \in \partial\mathbb{S}}, X\}$ be an interpolation family for which the spaces $X_z$, for a.\ e.\ $z \in \mathbb{S}_j$, have type $p_j$ with uniformly bounded constants, satisfy $p_{X_z} = p_j$, $j=0, 1$, and $p_0 \neq p_1$. Consider $p$ given by $\frac{1}{p} = \frac{1 - Re(z_0)}{p_0} + \frac{Re(z_0)}{p_{1}}$. Suppose also that $W$ is an infinite dimensional closed subspace of $X_{z_0}$.
 
 \textit{a)} If $p_W = p$, then $\Omega_{z_0}{\restriction_{W}}$ is not trivial.
 
 \textit{b)} If $X_{z_0}$ is infinite dimensional and $p_W = p$ for every $W$, then $\Omega_{z_0}$ is singular.
\end{theorem}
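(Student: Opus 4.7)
The overall strategy is to use \hyperref[lem:2.12]{Lemma 2.12} as the engine: it converts triviality of $\Omega_{z_0}\restriction_W$ into a quantitative growth estimate mixing $\beta_n(W)$ and the two edge quantities $\beta_{n,0}, \beta_{n,1}$. One then uses \hyperref[lem:2.7]{Lemma 2.7} to pin down these three quantities up to constants, so that the inequality collapses to a bound on $|\log(\beta_{n,0}/\beta_{n,1})|$ that must be independent of $n$. Because $p_0\neq p_1$ is assumed, this same ratio is forced to grow like a nonzero multiple of $\log n$, and the contradiction follows.

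Concretely, for part (a), I would assume for contradiction that $\Omega_{z_0}\restriction_W$ is trivial. \hyperref[lem:2.12]{Lemma 2.12} supplies $K>0$ with
\[
\left|\log\frac{\beta_{n,0}}{\beta_{n,1}}\right|\beta_n(W)\;\leq\;K\,\beta_{n,0}^{1-\theta}\beta_{n,1}^{\theta},\qquad n\in\mathbb{N}.
\]
Applying \hyperref[lem:2.7]{Lemma 2.7} to $X_z$ for a.e.\ $z\in\mathbb{S}_j$ (possible by the hypotheses of type $p_j$ with uniformly bounded constants and $p_{X_z}=p_j$) and passing to the essential supremum gives constants $C_0,C_1$ with
\[
n^{1/p_j}\;\leq\;\beta_{n,j}\;\leq\;C_j\, n^{1/p_j},\qquad j=0,1,
\]
while applying the same lemma to $W$ yields $n^{1/p}\leq\beta_n(W)$. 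Since $(1-\theta)/p_0+\theta/p_1=1/p$, the right hand side of the main inequality is at most $KC_0^{1-\theta}C_1^{\theta}\,n^{1/p}$, so after dividing by $n^{1/p}$ one gets $|\log(\beta_{n,0}/\beta_{n,1})|$ bounded by an $n$-independent constant. On the other hand, the two-sided estimates force
\[
\left|\log\frac{\beta_{n,0}}{\beta_{n,1}}\right|\;\geq\;\left|\tfrac{1}{p_0}-\tfrac{1}{p_1}\right|\log n\;-\;\log\max(C_0,C_1),
\]
which tends to infinity because $p_0\neq p_1$. This contradicts the previous bound, proving part (a).

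Part (b) is then a direct consequence. By the characterization recalled in \hyperref[sec:1.1]{Section 1.1}, $\Omega_{z_0}$ is singular precisely when $\Omega_{z_0}\restriction_W$ is nontrivial for every closed infinite dimensional $W\subset X_{z_0}$. Under the hypothesis $p_W=p$ for every such $W$, part (a) applied to each such $W$ delivers exactly this. The main obstacle is really already resolved in \hyperref[lem:2.12]{Lemma 2.12}; the only remaining subtlety is confirming that the essential supremum $\beta_{n,j}$ inherits the Maurey--Pisier lower bound $n^{1/p_j}$ from an a.e.\ slice of $\mathbb{S}_j$, which is immediate because the type hypothesis holds on a set of full measure in $\mathbb{S}_j$.
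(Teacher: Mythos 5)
Your proof is correct and follows essentially the same route as the paper: invoke Lemma~\ref{lem:2.12} to convert triviality of $\Omega_{z_0}\restriction_W$ into the inequality $\bigl|\log(\beta_{n,0}/\beta_{n,1})\bigr|\beta_n(W)\leq K\beta_{n,0}^{1-\theta}\beta_{n,1}^{\theta}$, use Lemma~\ref{lem:2.7} to pin each $\beta$-quantity between constant multiples of $n^{1/p_j}$ (respectively $n^{1/p}$), conclude that $|\log(\beta_{n,0}/\beta_{n,1})|$ is bounded, and derive a contradiction with its $|\frac{1}{p_0}-\frac{1}{p_1}|\log n$ growth forced by $p_0\neq p_1$; part (b) follows by applying (a) to each closed infinite-dimensional subspace.
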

\begin{proof}

\textit{a)} By the previous lemma, given $W$ such that $\Omega_{z_0}{\restriction_{W}}$ is trivial, we have $K>0$ such that
\begin{eqnarray*}
\Bigg|\log\frac{\beta_{n, 0}}{\beta_{n, 1}}\Bigg| \beta_n(W) \leq K \beta_{n, 0}^{1 - Re(z_0)} \beta_{n, 1}^{Re(z_0)}
\end{eqnarray*}

But by \hyperref[lem:2.7]{Lemma 2.7}, $n^{\frac{1}{p}} \leq \beta_n(W)$ and $\beta_{n, 0}^{1 - Re(z_0)} \beta_{n, 1}^{Re(z_0)} \leq C n^{\frac{1}{p}}$, for some $C >0$, and therefore $\Bigg|\log\frac{\beta_{n, 0}}{\beta_{n, 1}}\Bigg|$ is bounded. However, again by \hyperref[lem:2.7]{Lemma 2.7}, there are constants $C_1, C_2 >0$ for which
\begin{eqnarray*}
C_1 n^{\frac{1}{p_{0}} - \frac{1}{p_{1}}} \leq \frac{\beta_{n, 0}}{\beta_{n, 1}} \leq C_2 n^{\frac{1}{p_{0}} - \frac{1}{p_{1}}}
\end{eqnarray*}

If $p_{0} > p_{1}$, then $\frac{\beta_{n, 0}}{\beta_{n, 1}} \rightarrow 0$. If $p_{0} < p_{1}$, then $ \frac{\beta_{n, 0}}{\beta_{n, 1}} \rightarrow \infty$. In either case we have a contradiction.
Item \textit{b)} is a direct consequence of \textit{a)}.
\end{proof}

We can restate the theorem as:
\begin{theorem}\label{thm:2.14} [2.13 restated]
 Let $\{(X_z)_{z \in \partial\mathbb{S}}, X\}$ be an interpolation family for which the spaces $X_z$, for a.\ e.\ $z \in \mathbb{S}_j$ have type $p_j$ with uniformly bounded constants, satisfy $p_{X_z} = p_j$, $j=0, 1$, and $p_0 \neq p_1$. Consider $p$ given by $\frac{1}{p} = \frac{1 - Re(z_0)}{p_0} + \frac{Re(z_0)}{p_{1}}$.
 
 \textit{a)} Suppose that $W \subset X_{z_0}$ is an infinite dimensional closed subspace such that $p_W = p$. Then the twisted sum induced by $W$ is not trivial. In particular, if $X_{z_0}$ is infinite dimensional and $p_{X_{z_0}} = p$, $dX_{z_0}$ is a nontrivial extension of $X_{z_0}$.
 
 \textit{b)} If $X_{z_0}$ is infinite dimensional and $p_W = p$ for every infinite dimensional closed subspace $W \subset X_{z_0}$, then $dX_{z_0}$ is a singular extension of $X_{z_0}$.
\end{theorem}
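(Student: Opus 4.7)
My plan is to reduce the statement entirely to \hyperref[lem:2.12]{Lemma 2.12} combined with the two-sided estimate on $\beta_n$ from \hyperref[lem:2.7]{Lemma 2.7}, so the argument is a comparison of rates of growth in $n$.

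For part \textit{a)}, I would argue by contradiction: suppose $W \subset X_{z_0}$ is an infinite dimensional closed subspace with $p_W = p$ and that the twisted sum induced by $W$ \emph{is} trivial. By \hyperref[lem:2.4]{Lemma 2.4}, $\Omega_{z_0}\restriction_W$ differs from a linear map by a bounded map into $X_{z_0}$, and this is exactly the hypothesis needed to invoke \hyperref[lem:2.12]{Lemma 2.12}. So there exists $K>0$ such that
\[
\left|\log\frac{\beta_{n,0}}{\beta_{n,1}}\right| \beta_n(W) \leq K\,\beta_{n,0}^{1-\theta}\beta_{n,1}^{\theta}
\]
for every $n$.

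Next I would plug in the type-sharpness information. Since $p_{X_z}=p_j$ a.e.\ on $\mathbb{S}_j$ with uniformly bounded type constants, \hyperref[lem:2.7]{Lemma 2.7} gives an upper bound $\beta_{n,j} \leq C_j n^{1/p_j}$ and a lower bound $n^{1/p_j}\leq \beta_{n,j}$, so
\[
\beta_{n,0}^{1-\theta}\beta_{n,1}^{\theta} \leq C\, n^{(1-\theta)/p_0 + \theta/p_1} = C\, n^{1/p},
\]
while the assumption $p_W = p$ and the same lemma give $n^{1/p} \leq \beta_n(W)$. Substituting both into the previous inequality and cancelling $n^{1/p}$ yields that $\left|\log(\beta_{n,0}/\beta_{n,1})\right|$ is uniformly bounded in $n$. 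On the other hand, the two-sided bounds also force
\[
C_1\, n^{\frac{1}{p_0}-\frac{1}{p_1}} \leq \frac{\beta_{n,0}}{\beta_{n,1}} \leq C_2\, n^{\frac{1}{p_0}-\frac{1}{p_1}},
\]
and since $p_0 \neq p_1$ this ratio tends either to $0$ or to $\infty$; in particular $\left|\log(\beta_{n,0}/\beta_{n,1})\right|\to\infty$, contradicting the boundedness we just derived. The ``in particular'' clause follows by applying the contrapositive to $W = X_{z_0}$.

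Part \textit{b)} is then immediate: by the description of singularity in \hyperref[sec:1.1]{Section 1.1}, $dX_{z_0}$ is singular precisely when the quasi-linear map $\Omega_{z_0}$ (equivalently $\omega_{z_0}$) restricted to any infinite dimensional closed subspace $W$ is nontrivial, and the hypothesis $p_W = p$ for all such $W$ lets us apply part \textit{a)} to each of them. The only real step that requires insight is the conversion of triviality on $W$ into the analytic estimate of \hyperref[lem:2.12]{Lemma 2.12}; everything after that is the combinatorial comparison of powers of $n$ above, which will go through automatically once the hypotheses $p_0 \neq p_1$ and $p_W = p$ are in place.
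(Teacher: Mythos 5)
Your proof is correct and follows essentially the same route as the paper's: invoke Lemma \ref{lem:2.12} to turn triviality on $W$ into the growth inequality, use the two-sided estimate of Lemma \ref{lem:2.7} to force $\bigl|\log(\beta_{n,0}/\beta_{n,1})\bigr|$ to be bounded, and derive the contradiction from $p_0\neq p_1$, with part \textit{b)} as a direct corollary. The only difference is that you spell out a few steps the paper leaves implicit (the role of Lemma \ref{lem:2.4} and the ``in particular'' clause), which is fine.
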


\section{Cotype and extensions induced by interpolation}\label{sec:3}

We recall now the definition of (Rademacher) cotype of a normed space. Once again, we refer to \cite{Albiac01} for more information.
\begin{definition}\label{def:3.1}
Let $X$ be a normed space and $q \in [2, \infty]$. $X$ has \textit{cotype} $q$ if there is $K > 0$ such that, given any finite sequence of vectors $x_1, ..., x_n \in X$, we have
\[
\Bigg(\sum\limits_{j=1}^n \|x_j\|^q\Bigg)^{\frac{1}{q}} \leq K \mathbb{E}\Bigg\|\sum\limits_{j=1}^n \epsilon_j x_j\Bigg\|
\]

\noindent where the expected value is taken over all possible choices of signs $\epsilon_j = \pm 1$.
\end{definition}

Note that every space has cotype $\infty$ with constant $1$.

We have the following result due to Pisier \cite{Pisier02}:
\begin{theorem}\label{thm:3.2} Let $X$ be a Banach space with type strictly bigger than $1$. Then $X$ has cotype $q$ if and only if $X^*$ has type $p$, with $\frac{1}{p} + \frac{1}{q} = 1$.
\end{theorem}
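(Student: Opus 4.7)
The plan is to handle the two implications separately, since only one of them actually uses the hypothesis that $X$ has type strictly bigger than $1$.

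\textbf{Easy direction: $X^{*}$ of type $p$ implies $X$ of cotype $q$.} Given $x_{1},\dots ,x_{n}\in X$, I would pick norming functionals $x_{j}^{*}\in X^{*}$ with $\|x_{j}^{*}\|=1$ and $\langle x_{j}^{*},x_{j}\rangle =\|x_{j}\|$, and rescale to $y_{j}^{*}=\|x_{j}\|^{q-1}x_{j}^{*}$, so that $\langle y_{j}^{*},x_{j}\rangle =\|x_{j}\|^{q}$ and $\|y_{j}^{*}\|^{p}=\|x_{j}\|^{p(q-1)}=\|x_{j}\|^{q}$. Using $\mathbb{E}(\epsilon _{j}\epsilon _{k})=\delta _{jk}$,
\[
\sum_{j=1}^{n}\|x_{j}\|^{q}\;=\;\mathbb{E}\Big\langle \sum _{j}\epsilon _{j}y_{j}^{*},\;\sum _{j}\epsilon _{j}x_{j}\Big\rangle .
\]
Applying Cauchy--Schwarz in the Rademacher variable, Kahane's inequality (to switch between $L^{1}$ and $L^{2}$ averages up to a universal constant), and the type-$p$ hypothesis on $X^{*}$ bounds the right-hand side by $C\,T_{p}(X^{*})(\sum \|x_{j}\|^{q})^{1/p}\,\mathbb{E}\|\sum _{j}\epsilon _{j}x_{j}\|$. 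Dividing by $(\sum \|x_{j}\|^{q})^{1/p}$ leaves the cotype-$q$ estimate for $X$. Note that no K-convexity was needed here.

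\textbf{Hard direction: $X$ of cotype $q$ (plus type $>1$) implies $X^{*}$ of type $p$.} This is where the hypothesis enters, and I would invoke Pisier's K-convexity theorem: a Banach space of nontrivial Rademacher type is K-convex, meaning that the Rademacher projection
\[
P\colon L^{r}(\{\pm 1\}^{\mathbb{N}};X)\longrightarrow L^{r}(\{\pm 1\}^{\mathbb{N}};X),\qquad f\mapsto \sum _{j}\epsilon _{j}\,\mathbb{E}(\epsilon _{j}f),
\]
is bounded for every $1<r<\infty $. Since $P$ on $L^{q}(X^{*})$ is the formal adjoint of $P$ on $L^{p}(X)$, K-convexity transfers to $X^{*}$ with the same constant. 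Given $x_{1}^{*},\dots ,x_{n}^{*}\in X^{*}$, I would realize $(\mathbb{E}\|\sum _{j}\epsilon _{j}x_{j}^{*}\|^{p})^{1/p}$ (or its $L^{1}$ variant, equivalent by Kahane) as the supremum of $\mathbb{E}\langle \sum _{j}\epsilon _{j}x_{j}^{*},f\rangle $ over $f\in L^{q}(X)$ of norm at most $1$, then replace $f$ by $Pf=\sum _{j}\epsilon _{j}x_{j}$ at the cost of the K-convexity constant $K_{q}(X)$, and finally invoke cotype $q$ of $X$ on the Rademacher sum $Pf$ to control $(\sum \|x_{j}\|^{q})^{1/q}$. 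H\"older then returns the required $(\sum \|x_{j}^{*}\|^{p})^{1/p}$ bound, which is exactly type $p$ for $X^{*}$.

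\textbf{Main obstacle.} The genuine difficulty of the theorem is concentrated entirely in Pisier's K-convexity theorem, which is nontrivial and which I would simply cite as a black box rather than reprove. Everything else --- the norming-functional trick in the easy direction, Kahane's inequality, and the adjoint argument transferring K-convexity from $X$ to $X^{*}$ --- is routine duality bookkeeping once K-convexity is in hand.
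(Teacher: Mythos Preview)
Your sketch is essentially the standard proof of Pisier's duality theorem, and the steps you outline are correct: the easy direction via norming functionals and Kahane, the hard direction via Pisier's K-convexity theorem and duality of the Rademacher projection.

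However, the paper does not prove this statement at all. Theorem~3.2 is stated with attribution (``We have the following result due to Pisier~\cite{Pisier02}'') and is used as a black box to pass from type information on duals to cotype information on the original spaces in the proof of Theorem~3.4. There is nothing to compare your argument against; you have supplied a proof where the paper simply cites one.

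One minor remark on your write-up: in the hard direction you say you would realize the $L^p$-average of $\|\sum\epsilon_j x_j^*\|$ as a supremum over $f\in L^q(X)$ of norm at most $1$. This identification of $(L^p(X^*))^*$-type pairings requires some care in general (it is not automatic that $L^q(X)^* = L^p(X^*)$), but for the finite Rademacher sums in question the duality is just the finite-dimensional one $(\ell_n^p(X^*)$ against $\ell_n^q(X))$ combined with the Rademacher projection, so the argument goes through without needing any vector-valued $L^p$-duality subtleties. You might make that explicit to avoid the appearance of a gap.
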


A Banach space with nontrivial type is said to be \textit{B-convex}. This result suggests we might prove the analogue of \hyperref[thm:2.14]{Theorem 2.14} for cotype by an argument of dualization.

First of all, is the dual of an interpolation space the interpolation of the duals? This is not true in general (see \cite{Coifman1982}), so we simplify things by considering complex interpolation for a couple of Banach spaces, which we recall now.

\subsection{Interpolation of couples of Banach spaces}\label{sec:3.1}
We recall now the classical method of complex interpolation for a couple of Banach spaces due to Calder\'{o}n and independently to Lions. For a detailed exposition, see \cite{Bergh01}. Let $\overline{X} = (X_0, X_1)$ be a couple of Banach spaces. The couple is \textit{compatible} if the spaces are continuously embedded in a Hausdorff topological vector space, which we may then replace by the space
\begin{equation*}
    \Sigma(\overline{X}) = \{x_0 + x_1 : x_0 \in X_0, x_1 \in X_1\}
\end{equation*}

\noindent with the complete norm
\begin{equation*}
    \|x\|_{\Sigma(\overline{X})} = \inf\{\|x_0\|_{X_0} + \|x_1\|_{X_1} : x = x_0 + x_1, x_0 \in X_0, x_1 \in X_1\}
\end{equation*}

We also have the intersection space $\Delta(\overline{X}) = X_0 \cap X_1$ with the complete norm
\begin{equation*}
    \|x\|_{\Delta(\overline{X})} = \max\{\|x\|_{X_0}, \|x\|_{X_1}\}
\end{equation*}

Let $\mathcal{A}$ be the space of functions $f$ on $\overline{\mathbb{S}}$ with values in $\Sigma(\overline{X})$ such that:
\begin{itemize}
\item $f$ is $\Sigma(\overline{X})$-bounded and continuous on $\overline{\mathbb{S}}$;
\item $f$ is $\Sigma(\overline{X})$-analytic on $\mathbb{S}$;
\item The functions $t \mapsto f(j + it)$ are continuous functions from $\mathbb{R}$ into $X_j$, which tend to zero as $\left|t\right| \rightarrow \infty$, $j = 0, 1$.
\end{itemize}

This last condition of $t \mapsto f(j+it)$ tending to zero may be replaced by them being bounded.

If we consider on $\mathcal{A}$ the norm
\begin{equation*}
    \|f\|_{\mathcal{A}} = \max\{\sup\|f(it)\|_{X_0}, \sup\|f(1 + it)\|_{X_1}\}
\end{equation*}

\noindent then $\mathcal{A}$ is a Banach space for which the evaluation $\delta_{z_0}$ at $z_0 \in \mathbb{S}$ is a continuous map, and one has the interpolation space $X_{z_0} = \mathcal{A}/\ker(\delta_{z_0})$. It is easy to see that $X_{z_0} = X_{Re(z_0)}$ isometrically, so that most of the times we can restrict ourselves to work with $X_{\theta}$, $\theta \in (0, 1)$.

As happens for the interpolation of families of Banach spaces, the interpolation of couples induces an extension of the interpolation space. We can view the couple $(X_0, X_1)$ as a family $\{(X_z)_{z \in \partial\mathbb{S}}, X\}$, where $X_{j + it} = X_j$, $j = 0, 1$, the containing space is $\Sigma(\overline{X})$ and $X = \Delta(\overline{X})$, and the two methods give us the same interpolation spaces (Corollary 5.1 of \cite{Coifman1982}). The space $\mathcal{A}$ is a closed subspace of $\mathcal{F}$ (a consequence of Lemma 4.2.3 of \cite{Bergh01}), and since any two choices of a homogeneous bounded selection for $\delta_{\theta}$ give rise to the same extension of $X_{\theta}$, we have that the extensions defined by the two interpolation methods are actually the same. 

The following classical results (\cite{Bergh01}, 4.5.1 and 4.5.2) yield conditions that guarantee that the interpolation of the duals is the dual of the interpolation. The authors of \cite{Bergh01} reference Lions and Peetre \cite{Lions1964} and Calder\'on \cite{Calderon1964}.

\begin{theorem}\label{thm:3.3}
Suppose that $\Delta(\overline{X})$ is dense in $X_0$ and in $X_1$ and that at least one of the spaces $X_0$ or $X_1$ is reflexive. Then $(X_0, X_1)_{\theta}^* = (X_0^*, X_1^*)_{\theta}$ isometrically.
\end{theorem}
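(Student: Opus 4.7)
The plan is to establish norm-one inclusions in both directions between $(X_0^*, X_1^*)_\theta$ and $X_\theta^* := (X_0, X_1)_\theta^*$. First I would verify that the hypotheses make $(X_0^*, X_1^*)$ a genuine compatible couple: the density of $\Delta(\overline{X})$ in each $X_j$ gives injective continuous maps $X_j^* \hookrightarrow \Delta(\overline{X})^*$, so $\Delta(\overline{X})^*$ serves as a Hausdorff containing space. A standard property of the complex method gives also that $\Delta(\overline{X})$ is dense in $X_\theta$, which is needed when extending functionals by continuity.

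For the easy inclusion $(X_0^*, X_1^*)_\theta \hookrightarrow X_\theta^*$ I would use the three-lines lemma. Given $\phi \in (X_0^*, X_1^*)_\theta$ and $x \in X_\theta$, choose $g \in \mathcal{A}(X_0^*, X_1^*)$ with $g(\theta) = \phi$ and $f \in \mathcal{A}(X_0, X_1)$ with $f(\theta) = x$. The scalar function $h(z) = \langle g(z), f(z) \rangle$ is bounded and continuous on $\overline{\mathbb{S}}$ and holomorphic on $\mathbb{S}$, and on $\mathbb{S}_j$ one has $|h(j+it)| \leq \|g\|_{\mathcal{A}(\overline{X}^*)}\|f\|_{\mathcal{A}(\overline{X})}$. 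Hadamard's three-lines theorem yields $|\langle \phi, x\rangle|=|h(\theta)| \leq \|g\|_{\mathcal{A}}\|f\|_{\mathcal{A}}$, and taking infima produces $\|\phi\|_{X_\theta^*} \leq \|\phi\|_{(X_0^*, X_1^*)_\theta}$.

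For the reverse inclusion I would lift $\phi \in X_\theta^*$ to a functional $\Phi = \phi \circ \delta_\theta$ on $\mathcal{A}(\overline{X})$ of the same norm. Because $t \mapsto f(j+it)$ tends to zero at infinity, the boundary-value map embeds $\mathcal{A}(\overline{X})$ isometrically into $C_0(\mathbb{R}; X_0) \oplus_\infty C_0(\mathbb{R}; X_1)$. After a Hahn--Banach extension, I would invoke a vector-valued Riesz representation theorem to identify the dual of this direct sum with $M(\mathbb{R}; X_0^*) \oplus_1 M(\mathbb{R}; X_1^*)$, producing measures $\mu_0, \mu_1$ that represent $\Phi$. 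From these measures I would build the Poisson extension
\[
g(z) = \int_{\mathbb{R}} P(z, it)\, d\mu_0(t) + \int_{\mathbb{R}} P(z, 1+it)\, d\mu_1(t),
\]
and then verify that $g \in \mathcal{A}(\overline{X}^*)$, that $g(\theta)$ represents $\phi$ under the natural pairing with $\Delta(\overline{X})$, and that $\|g\|_{\mathcal{A}(\overline{X}^*)} \leq \|\Phi\|$ by subharmonicity and the standard Poisson-integral boundary estimates, which gives $\|\phi\|_{(X_0^*, X_1^*)_\theta} \leq \|\phi\|_{X_\theta^*}$.

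The main obstacle is the vector-valued Riesz representation step. Continuous vector-valued functions vanishing at infinity have their dual represented by regular, countably additive vector measures only when the target dual has the Radon--Nikod\'ym property; this is where the reflexivity hypothesis enters, because reflexivity of $X_0$ or $X_1$ delivers RNP for $X_0^*$ or $X_1^*$, and one handles the remaining factor by a duality/symmetry argument using that the Poisson integral of an $X_j^*$-valued measure produces boundary values back in $X_j^*$. A secondary technical point is checking that the constructed $g$ has non-tangential boundary values in the respective $X_j^*$ norms (not just weak-$*$), so that it genuinely belongs to $\mathcal{A}(\overline{X}^*)$ with the correct norm; this also relies on the density of $\Delta(\overline{X})$ to align the various topologies on the boundary.
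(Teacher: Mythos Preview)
The paper does not prove this theorem at all; it merely cites it as a classical result from Bergh--L\"ofstr\"om (Theorems 4.5.1 and 4.5.2), ultimately due to Calder\'on and Lions--Peetre. So there is no in-paper proof to compare your proposal against.

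On the merits of your sketch: the easy inclusion via the three-lines lemma is standard and correct. The hard inclusion, however, has a genuine gap that you have mislabelled as a ``secondary technical point.'' The Poisson extension $g$ built from the representing measures $\mu_0,\mu_1$ will in general \emph{not} belong to $\mathcal{A}(\overline{X}^*)$: membership in $\mathcal{A}$ demands that $t\mapsto g(j+it)$ be \emph{continuous} into $X_j^*$ and vanish at infinity, whereas the Poisson integral of a vector measure recovers its boundary data only in an almost-everywhere (or weak-$*$) sense. No amount of RNP repairs this, because RNP controls differentiability of the measure, not continuity of the boundary trace. Your remark that one ``handles the remaining factor by a duality/symmetry argument'' is also too vague to carry weight: only one of $X_0^*,X_1^*$ is assumed to have RNP, and the two boundary components are not interchangeable.

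The classical argument you are reaching for proceeds in two decoupled steps. First, using only the density hypothesis, one shows $(X_0,X_1)_\theta^* = (X_0^*,X_1^*)^\theta$, where the right-hand side is Calder\'on's \emph{upper} method (functions whose boundary values are merely bounded measurable, or equivalently described via Lipschitz conditions on primitives); your Poisson construction lands naturally there. Second, reflexivity of one factor is used to prove that the upper and lower methods coincide on the dual couple, $(X_0^*,X_1^*)^\theta = (X_0^*,X_1^*)_\theta$. Separating the two steps is what makes the argument close; collapsing them as you do leaves the boundary-regularity problem unresolved.
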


Therefore, we may write $(X_{\theta})^* = (X_0^*, X_1^*)_{\theta} = X_{\theta}^*$.

If we then dualize the twisted sum
  \[
  \xymatrix{ 0 \ar[r] & X_{\theta} \ar[r] & d_{\Omega_{\theta}}X_{\theta}
    \ar[r] & X_{\theta}\ar[r]  &0}
  \]

\noindent we obtain an extension of $X_{\theta}^*$
\begin{equation}\label{eq:3.1}
  \xymatrix{ 0 \ar[r] & X_{\theta}^* \ar[r] & (d_{\Omega_{\theta}}X_{\theta})^*
    \ar[r] & X_{\theta}^* \ar[r]  &0}
\end{equation}

Since we are dealing with the interpolation of the couple $\overline{X^*}$, we have the evaluation at $\theta$ from $\mathcal{F}(\overline{X^*})$ into $X^*_{\theta}$, which we shall denote by $\delta_{\theta}$ too. Similarly, we have the evaluation of the derivative at $\theta$ with respect to the interpolation of the couple $\overline{X^*}$, still denoted by $\delta_{\theta}'$.

If we denote by $B_{\theta}^*$ a homogeneous bounded selection for the evaluation at $\theta$ of the interpolation of the couple $\overline{X^*}$, we have an extension of $X_{\theta}^*$ induced by interpolation
\begin{equation}\label{eq:3.2}
  \xymatrix{ 0 \ar[r] & X_{\theta}^* \ar[r] & d_{\Omega_{\theta}^*}X_{\theta}^*
    \ar[r] & X_{\theta}^* \ar[r]  &0}
\end{equation}
  
\noindent where $\Omega_{\theta}^* = \delta'_{\theta} B_{\theta}^*$. So, if the twisted sums \eqref{eq:3.1} and \eqref{eq:3.2} are in some way equivalent, we can apply \hyperref[thm:2.14]{Theorem 2.14} to the twisted sum \eqref{eq:3.2} to obtain a similar result involving cotype.

In the language of the theory of duality of twisted sums (\cite{Sanchez02}, \cite{Castillo2007}) we look for the quasi-linear map conjugate to $\omega_{\theta}$ (recall from the introduction that $\omega_{\theta} = \Omega_{\theta} - L_{\theta}$).

The twisted sums \eqref{eq:3.1} and \eqref{eq:3.2} are (isomorphically) equivalent (see the survey \cite{Cwikel01} and \cite{Rochberg01}). In this way, we obtain a result analogue to the triviality part of \hyperref[thm:2.14]{Theorem 2.14} for cotype.

Denoting by $q_X = \inf\{q : X \text{ has cotype } q\}$, by \hyperref[thm:3.2]{Theorem 3.2}, if $X$ has type bigger than $1$, then $\frac{1}{p_{X^*}} + \frac{1}{q_X} = 1$. By noting that the dual of a trivial twisted sum is trivial, we can use \hyperref[thm:2.14]{Theorem 2.14} to obtain:
\begin{theorem}\label{thm:3.4}
Let $\overline{X} = (X_0, X_1)$ be a compatible pair of Banach spaces. Suppose $\Delta(\overline{X})$ dense in $X_0$ and in $X_1$ and that at least one of the spaces $X_0$ or $X_1$ is reflexive. Suppose also that $X_0$ and $X_1$ have type strictly bigger than $1$, that $X_0$ has cotype $q_{X_0}$, that $X_1$ has cotype $q_{X_1}$, $q_{X_0} \neq q_{X_1}$, that $X_{\theta}$ is infinite dimensional and that $q_{X_{\theta}}$ satisfies $\frac{1}{q_{X_{\theta}}} = \frac{1-\theta}{q_{X_0}} + \frac{\theta}{q_{X_1}}$. Then $d_{\Omega_{\theta}}X_{\theta}$ is a nontrivial extension of $X_{\theta}$.
\end{theorem}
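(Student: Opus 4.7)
The plan is to dualize the extension $d_{\Omega_\theta} X_\theta$, translate the cotype hypotheses into type hypotheses on the dual couple $\overline{X^*} = (X_0^*, X_1^*)$, apply Theorem 2.14(a) on the dual side, and then conclude by the observation that the dual of a trivial twisted sum of Banach spaces is again trivial.

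First I would set up the dual couple. Since $X_0$ and $X_1$ have nontrivial type and have cotypes $q_{X_0}$ and $q_{X_1}$, Pisier's Theorem 3.2 gives
\[
\frac{1}{p_{X_j^*}} + \frac{1}{q_{X_j}} = 1, \qquad j = 0, 1,
\]
so the hypothesis $q_{X_0}\neq q_{X_1}$ yields $p_{X_0^*} \neq p_{X_1^*}$. Proposition 2.2 applied to $\overline{X}$ shows that $X_\theta$ has type strictly bigger than $1$, so Pisier's theorem again yields $\frac{1}{p_{X_\theta^*}} = 1 - \frac{1}{q_{X_\theta}}$, and a direct computation using the interpolation formula for $q_{X_\theta}$ gives
\[
\frac{1}{p_{X_\theta^*}} = \frac{1-\theta}{p_{X_0^*}} + \frac{\theta}{p_{X_1^*}}.
\]
Under the reflexivity and density hypotheses Theorem 3.3 yields $X_\theta^* = (X_0^*, X_1^*)_\theta$ isometrically, and $X_\theta^*$ is infinite-dimensional because $X_\theta$ is. Viewing $(X_0^*, X_1^*)$ as an interpolation family that is constant on each vertical line makes the uniform type-constant hypothesis of Theorem 2.14 automatic. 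All hypotheses of Theorem 2.14(a) (with $W = X_\theta^*$) are then in force, and it follows that the extension $d_{\Omega_\theta^*} X_\theta^*$ induced by the interpolation of the dual couple is nontrivial.

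The central technical ingredient is the duality of Rochberg--Weiss extensions: the short exact sequence obtained by dualizing \eqref{eq:3.1} is isomorphically equivalent to \eqref{eq:3.2}. Concretely, the quasi-linear map conjugate to $\omega_\theta = \Omega_\theta - L_\theta$ agrees, up to a bounded linear perturbation, with the map $\omega_\theta^*$ built from the dual couple. I would invoke this as a known fact from the references cited in the excerpt (\cite{Cwikel01}, \cite{Rochberg01}, \cite{Sanchez02}, \cite{Castillo2007}) rather than reprove it, since the paper explicitly chooses to do so.

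To finish, suppose for contradiction that $d_{\Omega_\theta} X_\theta$ is a trivial extension of $X_\theta$. Then the copy of $X_\theta$ inside is complemented, and taking adjoints of the associated projection produces a bounded projection onto the embedded copy of $X_\theta^*$ in $(d_{\Omega_\theta} X_\theta)^*$; hence \eqref{eq:3.1} is trivial. By the equivalence of \eqref{eq:3.1} with \eqref{eq:3.2}, $d_{\Omega_\theta^*} X_\theta^*$ would also be trivial, contradicting the conclusion of the second paragraph. The main obstacle is the identification of \eqref{eq:3.1} with \eqref{eq:3.2}: once this duality of Rochberg--Weiss extensions is taken for granted, the rest is bookkeeping with Pisier's theorem and the interpolation formula for type/cotype.
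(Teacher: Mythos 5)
Your proof is correct and follows essentially the same route as the paper's: dualize the extension, identify $(d_{\Omega_\theta}X_\theta)^*$ with $d_{\Omega_\theta^*}X_\theta^*$ via the known isomorphic equivalence of Rochberg--Weiss extensions under duality (cited rather than reproved, exactly as the paper does), translate the cotype hypotheses on $\overline X$ into type hypotheses on $\overline{X^*}$ by Pisier's theorem (Theorem 3.2) together with the interpolation formula from Proposition 2.2, apply Theorem 2.14(a) to the dual couple, and close with the observation that the dual of a trivial twisted sum is trivial. The only tiny imprecision is that the adjoint of the projection onto the subspace copy of $X_\theta$ has the embedded copy of $X_\theta^*$ as its \emph{kernel}, so the projection witnessing triviality of the dual sequence is $I-P^*$ (or, more directly, the adjoint of a bounded linear section of the quotient map); this does not affect the argument.
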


\section{Examples}\label{sec:4}

In this section we mention some already known examples for illustrative purposes, showing in particular that \hyperref[thm:2.14]{Theorem 2.14} is optimal. In \hyperref[sec:4.3]{Section 4.3}, based on \cite{Fuente2014}, we study submodules of the Schatten classes, and in \hyperref[sec:4.4]{Section 4.4} we show new nontrivial twisted sums in which the three spaces in the exact sequence do not have the Compact Approximation Property.

\subsection{\texorpdfstring{$\ell_p$}{lp} spaces}\label{sec:4.1}

Consider the interpolation scale $(\ell_{\infty}, \ell_1)_{\theta} = \ell_p$, where $p = \frac{1}{\theta}$. We have the extensions defined by interpolation:
  \begin{equation}\label{eq:4.1}
  \xymatrix{ 0 \ar[r] & \ell_p \ar[r] & Z_p
    \ar[r] & \ell_p\ar[r]  &0}
  \end{equation}
  
It is already known that these extensions are singular and cosingular, for every $1 < p < \infty$. Using theorems \ref{thm:2.14} and \ref{thm:3.4} and reiteration, we obtain singularity of $(4.1)$ for $1 < p < 2$, nontriviality for $2 < p < \infty$, and by duality cosingularity for $2 < p < \infty$.

\subsection{\texorpdfstring{$L_p$}{Lp} spaces}\label{sec:4.2}
Consider the $L_p$ spaces over $[0, 1]$. We have the interpolation scale $(L_{\infty}[0,1], L_1[0,1])_{\theta} = L_p[0,1]$, where $p = \frac{1}{\theta}$. We then get extensions
  \begin{equation}\label{eq:4.2}
  \xymatrix{ 0 \ar[r] & L_p[0,1] \ar[r] & LZ_p
    \ar[r] & L_p[0,1]\ar[r]  &0}
  \end{equation}

For $1 < p < \infty$ these are nontrivial twisted sums, but unlike the previous case these are \textit{not} singular (\cite{Fuente2013}, see also \cite{Sanchez01}). Actually, the extension is trivial on the copy of $\ell_2$ spanned by the Rademacher functions, and if $1 < p < q < 2$, then it is trivial on a copy of $\ell_q$. This shows that \hyperref[thm:2.14]{Theorem 2.14} is optimal. Again, we can use theorems \ref{thm:2.14} and \ref{thm:3.4} to get nontriviality for $p \neq 2$.

\subsection{Schatten classes}\label{sec:4.3}

This section treats of a particular case of interpolation between a von Neumann algebra and its predual. For a study of twisted sums in this context, see \cite{CabelloSanchez2016}.

Let $H$ be the separable Hilbert space. For $1 \leq p < \infty$, the Schatten class $C_p$ (we follow the notation of \cite{Arazy1975}) is the space of compact operators $x$ on $H$ such that $\|x\|_p = (trace(x^*x)^{\frac{p}{2}})^{\frac{1}{p}} < \infty$. With the norm $\|.\|_p$, $C_p$ is a Banach space.

The space $C_p$ can be viewed as a noncommutative version of the function space $L_p$. It reproduces much of the behaviour of its commutative version. For example, $C_p$ has type $\max\{p, 2\}$. Since we can embed $l_p$ in $C_p$ as diagonal operators, we have that $p_{C_p} = \max\{p, 2\}$.

Also, $B(H)$ is the noncommutative analog of $L_{\infty}$, and by embedding $C_1$ in $B(H)$, we see that $(B(H), C_1)$ is a compatible couple, and actually $(B(H), C_1)_{\theta} = C_p$, with $\theta = \frac{1}{p}$ \cite{Pisier2003}.

That means that for $1 < p < \infty$ we can twist $C_p$ with itself to get an extension $\Theta_p$ of $C_p$ induced by interpolation. In \cite{Fuente2014} the singularity of this extension is studied, and the following result is obtained:
\begin{theorem}\label{thm:4.1}
Let $V$ be a closed (left, right or bilateral) $B(H)$-submodule of $C_p$ for $1 < p < \infty$. The following are equivalent:
\begin{itemize}
    \item[(a)] The twisted sum induced by $V$ is nontrivial.
    \item[(b)] $\max\{rk(T) : T \in V\} = \infty$, where $rk(T)$ is the rank of $T$.
\end{itemize}
\end{theorem}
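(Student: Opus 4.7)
My plan is to prove the equivalence by reducing each direction to the behaviour of the Kalton--Peck extension $Z_p$ on $\ell_p$, which is nontrivial (even singular) for $1<p<\infty$ (\hyperref[sec:4.1]{Section 4.1}). The bridge is that when one restricts the scale $(B(H),C_1)_{1/p} = C_p$ to a ``diagonal'' subspace spanned by rank-one operators $u_n\otimes v_n$ with $(u_n)$ and $(v_n)$ orthonormal, the restricted scale is isometrically the diagonal scale $(\ell_\infty,\ell_1)_{1/p}=\ell_p$, because conjugation by the isometries sending $e_n\mapsto u_n$ and $e_n\mapsto v_n$ is an isometry of every $C_q$ ($1\le q\le\infty$) and thus intertwines the entire interpolation scale. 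Consequently $\Omega_p$ restricted to such a subspace is equivalent to the Kalton--Peck quasi-linear map on $\ell_p$.

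For $(b)\Rightarrow(a)$: assuming the ranks in $V$ are unbounded, I would use the $B(H)$-module structure to produce inside $V$ a sequence of rank-one operators $u_n\otimes v_n$ with $(u_n)$ and $(v_n)$ orthonormal. For a bilateral submodule this is immediate: if $T=\sum\sigma_i u_i\otimes v_i\in V$ has large rank then $(e\otimes u_i)T(v_i\otimes f)=\sigma_i e\otimes f\in V$ for arbitrary $e,f\in H$, forcing $V=C_p$. For a one-sided submodule one proceeds inductively, using finite-rank projections to truncate away the span of previously chosen initial/range vectors, and then applying polar decomposition to high-rank elements of $V$ to extract a new rank-one piece orthogonal to those already chosen. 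The closed linear span $W$ of the resulting rank-ones is isometric to $\ell_p$ in $C_p$, and by the discussion above, the restriction of $\Omega_p$ to $W$ is equivalent to the Kalton--Peck map, hence nontrivial. Since nontriviality passes from $W$ to any larger subspace, the twisted sum induced by $V$ is nontrivial.

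For $(a)\Rightarrow(b)$: I prove the contrapositive. Assume every $T\in V$ has rank at most $N$. Then $\|T\|_\infty\le\|T\|_p$ and $\|T\|_1\le N^{1-1/p}\|T\|_p$, so the $B(H)$-, $C_1$- and $C_p$-norms are equivalent on $V$. This permits the following explicit \emph{linear} bounded selection for $\delta_\theta$: set $B^{(V)}(T)(z)=g(z)\,T$ where $g(z)=e^{\delta(z-\theta)^2}$ for some $\delta>0$. Then $g(\theta)=1$, $g'(\theta)=0$, and $|g(j+it)|=e^{\delta((j-\theta)^2-t^2)}$ is bounded on $\partial\mathbb{S}$ and vanishes as $|t|\to\infty$, so $B^{(V)}(T)\in\mathcal{A}$ with norm controlled by $e^\delta N\|T\|_p$. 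Fix any homogeneous bounded selection $B$ used to define $\Omega_\theta$. The difference $B-B^{(V)}:V\to\mathcal{F}$ takes values in $\ker\delta_\theta$, and by \hyperref[lem:2.3]{Lemma 2.3} (combined with $\vertiii{\cdot}\le\|\cdot\|_{\mathcal{F}}$) the map $\delta'_\theta\circ(B-B^{(V)}):V\to X_\theta$ is bounded. Since $\delta'_\theta B^{(V)}(T)=g'(\theta)\,T=0$, this yields $\Omega_\theta\!\restriction_V$ bounded into $X_\theta$, which means the twisted sum induced by $V$ is trivial.

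The main obstacle is the first step of $(b)\Rightarrow(a)$ for one-sided submodules: extracting a diagonal $\ell_p$ inside $V$ from unbounded ranks requires orchestrating the module action (which only controls the range side, respectively the initial side) together with polar decomposition to simultaneously orthogonalise the left and right vectors of the rank-one pieces produced. Once this orthogonal diagonal family is available, the remainder of the argument is a clean appeal to the known nontriviality of the Kalton--Peck extension. The triviality direction, by contrast, is essentially a soft computation once one notices that boundedness of rank collapses all interpolation norms on $V$ to equivalent ones, permitting the linear selection $B^{(V)}$ above.
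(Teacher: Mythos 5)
Note first that the paper does not prove Theorem~\ref{thm:4.1}: it is quoted verbatim from \cite{Fuente2014}, so there is no proof in the paper to compare against. Your overall strategy (reduce to the Kalton--Peck extension via a ``diagonal'' $\ell_p$ inside $C_p$, and, in the other direction, exhibit an explicit linear bounded selection when the ranks are bounded) is a reasonable one, and the $(a)\Rightarrow(b)$ half is essentially complete and correct: the rank bound makes $V$ sit inside $B(H)\cap C_1$ with equivalent norms, the function $z\mapsto g(z)T$ with $g(z)=e^{\delta(z-\theta)^2}$ is a legitimate element of $\mathcal{A}$, it is a \emph{linear} bounded selection for $\delta_\theta$ on $V$ with $\delta'_\theta B^{(V)}=0$, and Lemma~\ref{lem:2.3} turns $B-B^{(V)}$ into a bound on $\Omega_\theta\!\restriction_V$.

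The gap is in $(b)\Rightarrow(a)$ for one-sided submodules. Your induction ``truncates away the span of previously chosen initial/range vectors,'' but a left module only allows truncation of the range side (by left-multiplying by $1-P$), and a right module only of the initial side; the other family of vectors is not controlled, so there is no reason the new rank-one piece should be orthogonal to the previously chosen ones on that side. Since you need \emph{both} $(u_n)$ and $(v_n)$ orthonormal to identify $\overline{\mathrm{span}}(u_n\otimes v_n)$ with a diagonal $\ell_p$ and to conjugate the whole scale onto $(\ell_\infty,\ell_1)$, this step as written fails. A clean fix: for a left module, polar decomposition gives $|T|=U^*T\in V$ with the same rank, so $V$ contains positive operators of unbounded rank; normalizing and forming $A=\sum_n 2^{-n}A_n$, one gets $A\in V$ (closedness) positive of \emph{infinite} rank, and its spectral decomposition $A=\sum_i\lambda_i\,v_i\otimes v_i$ gives orthonormal $v_i$ with $v_i\otimes v_i=(v_i\otimes v_i)A/\lambda_i\in V$. (For a right module, use $|T^*|=TU^*\in V$ and argue symmetrically.) With that replacement the rest of your reduction to Kalton--Peck goes through, provided you also justify that the diagonal conditional expectation $P$ is a contraction on every $C_q$ and that $P\Omega_\theta\!\restriction_W=\delta'_\theta(PB_\theta)\!\restriction_W$ is a bounded selection for the restricted scale, hence equivalent to the Kalton--Peck map on $\ell_p$.
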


Using this theorem and our result on nontriviality and type, we get the following result, which to the best of our knowledge is new.
\begin{theorem}\label{thm:4.2}
Let $1 < p < 2$, and let $V$ be a closed infinite dimensional (left, right or bilateral) $B(H)$-submodule of $C_p$ such that $\max\{rk(T) : T \in V\} < \infty$. Then, viewed as a subspace of $C_p$:
\begin{itemize}
    \item[(a)] $V$ does not contain an isomorphic copy of $C_p$.
    \item[(b)] $V$ does not contain an isomorphic copy of $\ell_p$.
    \item[(c)] Every basic sequence in $V$ admits a subsequence equivalent to the canonical basis of $\ell_2$ such that its span is complemented in $C_p$.
\end{itemize}
\end{theorem}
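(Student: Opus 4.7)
The plan is to combine Theorem~\ref{thm:4.1} (which turns the rank hypothesis into triviality of the twisted sum) with Theorem~\ref{thm:2.14} (which forces a type gap on any subspace on which the twisted sum is trivial). I would first observe that $\max\{rk(T):T\in V\}<\infty$ gives, via Theorem~\ref{thm:4.1}, that the twisted sum induced by $V$ in $\Theta_p$ is trivial. Since a bounded linear selection for the quotient map above $V$ restricts to such a selection above any closed subspace $W\subseteq V$, the induced twisted sum is also trivial on every infinite-dimensional closed $W\subseteq V$.

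To apply Theorem~\ref{thm:2.14} I would first note that $\Theta_p$ arises from $(B(H),C_1)$, where both endpoints have $p_{X_j}=1$, so the theorem does not apply directly. By complex interpolation reiteration one also has $C_p=(C_1,C_2)_{2-2/p}$ for $1<p<2$, and (as in the $\ell_p$ case of Section~\ref{sec:4.1}) the Rochberg--Weiss extension associated with this reiterated scale is equivalent to $\Theta_p$. In the reiterated scale $p_{C_1}=1$, $p_{C_2}=2$, the type constants are uniform on each vertical line, and the interpolated value is exactly $p=p_{C_p}$. Thus the contrapositive of Theorem~\ref{thm:2.14}(a) yields $p_W\neq p$ for every infinite-dimensional closed $W\subseteq C_p$ on which the twisted sum is trivial; since $W\subseteq C_p$ already forces $p_W\geq p_{C_p}=p$, we obtain $p_W>p$.

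Parts (a) and (b) then follow by contradiction. Both $C_p$ and $\ell_p$ satisfy $p_Y=p$ for $1<p<2$, by the diagonal embedding of $\ell_p$ into $C_p$ and Maurey--Pisier (cf.\ Lemma~\ref{lem:2.7}). Hence if $V$ contained an isomorphic copy $W$ of $C_p$ or of $\ell_p$, the relation $p_W=p$ would contradict $p_W>p$. For (c) I would invoke the classical structural theory of $C_p$ due to Arazy and Arazy--Lindenstrauss: in $C_p$ with $1<p<2$, every normalized weakly null sequence has a subsequence equivalent either to the canonical basis of $\ell_p$ or to that of $\ell_2$, and in the $\ell_2$ case the span is automatically complemented in $C_p$. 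A given basic sequence in $V$ yields, by reflexivity of $C_p$ and the Bessaga--Pe\l czy\'nski selection principle, a weakly null normalized basic subsequence; part (b) rules out the $\ell_p$ alternative, so only the $\ell_2$ alternative with complemented span remains.

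The main obstacle is the middle step: the identification of $\Theta_p$ with the extension induced by the reiterated scale $(C_1,C_2)_{2-2/p}$, which is what makes Theorem~\ref{thm:2.14} applicable. This reiteration device is already used in Sections~\ref{sec:4.1} and~\ref{sec:4.2}, but one must verify that triviality of the twisted sum on $V$ is preserved under the passage from one scale to the other. Beyond this, part (c) relies on external structural results for $C_p$ (the Arazy--Lindenstrauss dichotomy together with complementation of $\ell_2$-subspaces arising from weakly null sequences), rather than on the interpolation machinery developed in the paper.
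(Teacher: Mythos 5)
Your proof follows essentially the same route as the paper's: reiterate to place $C_p$ inside the scale $(C_2,C_1)$ (equivalently $(C_1,C_2)$, as you write) so that the endpoint type constraints of Theorem~\ref{thm:2.14} are met, then play Theorem~\ref{thm:4.1}'s triviality against Theorem~\ref{thm:2.14}(a)'s nontriviality, and finish (c) by citing Arazy's structure theorem for basic sequences in $C_p$. The paper argues slightly more directly for (a) and (b) — it observes that $V\subseteq C_p$ together with a copy of $\ell_p$ or $C_p$ inside $V$ forces $p_V=p$, and then applies Theorem~\ref{thm:2.14}(a) to $V$ itself — whereas you pass to arbitrary infinite-dimensional closed $W\subseteq V$ and conclude $p_W>p$; the two are equivalent. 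The ``obstacle'' you flag (that triviality of the induced twisted sum survives the passage to the reiterated scale, i.e.\ that the derived extensions of the two scales are isomorphically equivalent) is real but is taken for granted in the paper as well, both here and in Sections~\ref{sec:4.1} and~\ref{sec:4.2}; it is the standard reiteration behaviour of the Rochberg--Weiss construction, to which the paper alludes but does not prove. For (c) the paper simply cites \cite{Arazy1975}, Theorem 1, which is the same dichotomy you describe.
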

\begin{proof}
First we notice that by using reiteration we can considerate the interpolation as happening between $C_2$ and $C_1$. In this case, $C_p = (C_2, C_1)_{\eta}$, where $\frac{1}{p} = \frac{1 - \eta}{2} + \eta$, and the conditions on $p_{C_1}, p_{C_2}$ and $p_{C_p}$ of \hyperref[thm:2.14]{Theorem 2.14} are satisfied.

Now, if $V$ contained a copy of either $C_p$ or $\ell_p$ then it would follow that $p_V = p$, and the twisted sum induced by $V$ would not be trivial. This proves $(a)$ and $(b)$.

Assertion $(c)$ follows from $(b)$ and \cite{Arazy1975}, Theorem 1.
\end{proof}

Notice that if $V$ is bilateral, then it trivially satisfies $\max\{rk(T) : T \in V\} = \infty$. Notice also that the Schatten classes have a natural basis, but we do not know if \hyperref[thm:4.2]{Theorem 4.2} can be obtained from the results of \cite{Castillo01}.

\subsection{Spaces without the CAP}\label{sec:4.4}
In \cite{Szankowski1978}, Szankowski gave examples of subspaces of $\ell_p$ without the Compact Approximation Property (CAP), $1 \leq p < 2$. We now use them to obtain nontrivial extensions of spaces without the CAP.

The novelty of these examples is in the lack of structure of the spaces in the interpolation scale. Consider, for example, one of the singularity criterions of \cite{Castillo01} (Corollary 5.11): the spaces must have a common $1-$monotone basis, and we must have knowledge of the asymptotic behaviour of successive vectors and of the behaviour of block-sequences.

We follow the presentation of \cite{Lindenstrauss1979} (Theorem 1.g.4).

Let $\sigma_n = \{2^n, ..., 2^{n+1} - 1\}$ and $\Delta_n$ be a suitable partition of $\sigma_n$. For $1 \leq p \leq 2$, consider the following space:
\[
X_p = (\oplus_{n \geq 2} \oplus_{A \in \Delta_n} \ell_2(A))_p
\]

$X_p$ is isomorphic to $\ell_p$ for $1 < p \leq 2$ and $X_1$ is isomorphic to a subspace of $\ell_1$. By including $X_1$ in $X_2$, we have an interpolation scale $(X_2, X_1)$, and it is easy to see that $(X_2, X_1)_{\theta} = X_p$, where $\frac{1}{p} = \frac{1 - \theta}{2} + \theta$.

For each $i \geq 2$, let
\[
z_i = e_{2i} - e_{2i +1} + e_{4i} + e_{4i + 1} + e_{4i + 2} + e_{4i + 3}
\]

Let $w \in span(z_i)_{i\geq 2} = [z_i]_{i\geq 2}$. We define the $\Delta-$support of $w$ to be the set $\{A \in \cup_n\Delta_n : A \cap supp(w) \neq \emptyset\}$. 

If $Y_p$ is the closed subspace of $X_p$ generated by $(z_i)_{i \geq 2}$, we have that $Y_p$ is a space without the CAP, for $1 \leq p < 2$.

Let $W_p = (Y_2, Y_1)_{\theta}$, where $\frac{1}{p} = \frac{1 - \theta}{2} + \theta$. Of course, the first question is whether $W_p = Y_p$. We do not know that. In general, interpolation does not preserve subspaces.

\begin{props}\label{pro:4.3} For each $1 < p < 2$, $W_p$ is a space without the CAP.
\end{props}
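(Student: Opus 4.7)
My plan is to identify $W_p$ isomorphically with $Y_p$ via the canonical interpolation map, and then invoke Szankowski's theorem on the failure of CAP for $Y_p$. First I would unpack the couple $(Y_2,Y_1)$: because $X_1$ embeds continuously into $X_2$, so does $Y_1\hookrightarrow Y_2$, which gives $\Delta(\overline{Y})=Y_2\cap Y_1=Y_1$ with its $Y_1$-norm. Since $\operatorname{span}(z_i)$ is dense in $Y_1$ by construction and $\Delta(\overline{Y})$ is dense in $W_p$, it follows that $\operatorname{span}(z_i)$ is dense in $W_p$; in particular, every $z_i$ lies in $W_p$.

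The isometric inclusions $Y_j\hookrightarrow X_j$ induce, by complex interpolation for couples, a bounded linear map $J:W_p\to X_p$ of norm at most $1$ that acts as the identity on $\operatorname{span}(z_i)$; by density, $J(W_p)\subseteq\overline{\operatorname{span}(z_i)}^{X_p}=Y_p$. The easy inequality $\|w\|_{X_p}\leq\|w\|_{W_p}$ on $\operatorname{span}(z_i)$ is automatic, since the interpolation functions admissible for $(X_2,X_1)$ form a larger class than those for $(Y_2,Y_1)$. The heart of the argument is the reverse bound $\|w\|_{W_p}\leq C\|w\|_{X_p}$ on $\operatorname{span}(z_i)$, which amounts to a $K$-closedness statement for the subcouple $(Y_2,Y_1)$ inside $(X_2,X_1)$. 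I would try to obtain it by starting from an optimal Calder\'on-type interpolation function for $w$ in the explicit representation $X_p=(\bigoplus_A\ell_2(A))_p$ and correcting it blockwise into an $\mathcal{A}(\overline{Y})$-valued function, exploiting that each block $\ell_2(A)$ is Hilbert, so that orthogonal projection onto $Y_j\cap\ell_2(A)$ is a contraction on both the $X_2$- and $X_1$-sides.

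Once the identification $W_p\cong Y_p$ is in place, Szankowski's theorem (\cite{Lindenstrauss1979}, Theorem 1.g.4) gives that $Y_p$ lacks CAP for $1\leq p<2$, and CAP being an isomorphism invariant yields the proposition for $W_p$. The principal difficulty I foresee is the $K$-closedness step: the Szankowski vectors $z_i$ straddle several blocks $\ell_2(A)$, so verifying that the blockwise-projected interpolation function actually lands in $Y_j$ (rather than in a strictly larger blockwise span generated by restrictions of the $z_i$ to single blocks) requires a delicate combinatorial argument. As a fallback, one could try to rerun Szankowski's original argument directly inside $W_p$, using only $\|w\|_{X_p}\leq\|w\|_{W_p}\leq\|w\|_{Y_2}^{1-\theta}\|w\|_{Y_1}^{\theta}$ to reproduce the compactness/approximation estimates that witness the failure of CAP.
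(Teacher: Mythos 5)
Your primary plan---identify $W_p\cong Y_p$ by a $K$-closedness argument for the subcouple $(Y_2,Y_1)$ inside $(X_2,X_1)$ and then quote Szankowski---is not what the paper does, and the paper explicitly flags the very point you worry about: it states that it does \emph{not} know whether $W_p=Y_p$, since interpolation need not preserve subspaces. The obstruction you name in the $K$-closedness step is real. Each $z_i=e_{2i}-e_{2i+1}+e_{4i}+\cdots+e_{4i+3}$ straddles two consecutive levels $\sigma_n,\sigma_{n+1}$ and meets several blocks $A\in\Delta_n\cup\Delta_{n+1}$, while each block $A$ lies inside a single level; so $Y_j\cap\ell_2(A)$ is typically $\{0\}$, and the proposed blockwise orthogonal projection onto $Y_j\cap\ell_2(A)$ would annihilate the $\mathcal{A}(\overline{X})$-valued competitor rather than correct it into $\mathcal{A}(\overline{Y})$. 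Nothing in the proposal resolves this, so the isomorphism $W_p\cong Y_p$ is not established.

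Your fallback---rerun Szankowski's argument directly in $W_p$ using only one-sided norm comparisons---is exactly the paper's strategy, but as stated it lacks its engine. The raw bound $\|w\|_{W_p}\leq\|w\|_{Y_2}^{1-\theta}\|w\|_{Y_1}^{\theta}$ degrades as the $\Delta$-support of $w$ grows, so on its own it cannot transfer Szankowski's estimates uniformly. The crucial extra observation in the paper is combinatorial: every $z_i$ has $\Delta$-support of size at most $6$, and every linear combination $\sum\theta_j y_j$ appearing in Szankowski's proof has $\Delta$-support of size at most $9$. Taking the constant interpolation function and H\"older, this yields $\|w\|_{W_p}\leq n^{1-1/p}\|w\|_{Y_p}$ with $n\leq 9$, so $\|\cdot\|_{W_p}$ is comparable to $\|\cdot\|_{Y_p}$, up to a fixed constant, precisely on the vectors that matter. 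Together with the density of $[z_i]$ in $W_p$ (Lemma 4.2.3 of Bergh--L\"ofstr\"om), and the fact that the functionals $z_i^*$ are still $w^*$-null and uniformly bounded on $W_p$ because $\|z^*\|_{W_p^*}\leq\|z^*\|_{X_p^*}$, this lets Szankowski's argument run verbatim in $W_p$. So you have landed on the right idea as a fallback, but to make it a proof you still need to supply the bounded-$\Delta$-support observation; without it the one-sided interpolation inequality is not enough.
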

\begin{proof}
To see that $W_p$ does not have the CAP, we show that all that is used to prove that $Y_p$ does not have it still holds in $W_p$.

If the $\Delta-$support of $w$ has size $n$, by taking the constant function $w$ we have
\[
\|w\|_{W_p} \leq n^{1 - \frac{1}{p}}\|w\|_{Y_p}
\]

So
\[
\|z_i\|_{W_p} \leq 6^{1 - \frac{1}{p}}\|z_i\|_{Y_p}
\]

\noindent and therefore $\sup \|z_i\|_{W_p} < \infty$.

For $i \geq 2$, let
\[
z_i^* = \frac{e^*_{2i} - e^*_{2i +1}}{2}
\]

Then $z_i^*(z_i) = 1$ for every $i \geq 2$. Since $[z_i]$ is dense in $Y_1$ and $Y_1 \subset Y_2$, it follows from Lemma 4.2.3 of \cite{Bergh01} that $[z_i]$ is dense in $W_p$. We also have that $z_i^*$ is $w^*-$convergent to $0$.

Because of the inclusion $W_p \subset X_p$ (from interpolation), we have $\|z^*\|_{W_p^*} \leq \|z^*\|_{X_p^*}$ for every $z^* \in X_p^*$.

One of the crucial steps of the proof that $Y_p$ does not have the CAP is that we can nicely bound
\[
\|\sum \theta_j y_j\|_{Y_p}
\]

\noindent where $y_j \in Y_p$ are specific vectors appearing in the construction. These sums have $\Delta-$support of size at most $9$, and therefore
\[
\|\sum\theta_j y_j\|_{W_p} \leq 9^{1 - \frac{1}{p}}\|\sum\theta_j y_j\|_{Y_p}
\]

\noindent the factor $9^{1 - \frac{1}{p}}$ being immaterial.

This is enough to reproduce the proof that $Y_p$ does not have the CAP, and thus $W_p$ does not have it either.
\end{proof}

The interpolation scheme $(W_2, W_1)$ induces for each $1 < p < 2$ a twisted sum
  \begin{equation}\label{eq:4.3}
  \xymatrix{ 0 \ar[r] & W_p \ar[r] & dW_p
    \ar[r] & W_p\ar[r]  &0}
  \end{equation}

Let $\Omega_p$ be the quasi-linear map that defines \eqref{eq:4.3}. Using \hyperref[thm:2.14]{Theorem 2.14}, we are able to obtain:
\begin{theorem}\label{thm:4.4}
For each $1 < p < 2$, the quasi-linear map $\Omega_p$ is nontrivial on each subspace of $W_p$ which contains a subsequence $(z_{i(n)})$ of $(z_n)$ for which there is a sequence $(j(n))$ of naturals such that:
\begin{enumerate}
    \item For every $n \in \mathbb{N}$, we have $supp (z_{i(n)}) \subset \sigma_{j(n)} \cup \sigma_{j(n) + 1}$
    \item If $n \neq m$, $\sigma_{j(n)} \neq \sigma_{j(m)}$ and $\sigma_{j(n)} \neq \sigma_{j(m) + 1}$.
\end{enumerate}
\end{theorem}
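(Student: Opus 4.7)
The plan is to apply \hyperref[thm:2.14]{Theorem 2.14}(a) to the compatible pair $(Y_2, Y_1)$, viewed as an interpolation family in the sense of Section 1.2, at the parameter $\theta \in (0,1)$ satisfying $\frac{1}{p} = \frac{1-\theta}{2} + \theta$. Since $Y_2$ is a closed subspace of the Hilbert space $X_2$, it is itself Hilbert, so $p_{Y_2}=2$. Since $Y_1$ is an infinite-dimensional closed subspace of $X_1 \subset \ell_1$, by Rosenthal's $\ell_1$ theorem (equivalently, the Schur property of $\ell_1$) it contains an isomorphic copy of $\ell_1$, and in particular it contains $\ell_1^n$ uniformly, so $p_{Y_1}=1$. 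Thus $p_0=2 \neq 1=p_1$ and the interpolation relation for $p$ matches that of the theorem; the uniform boundedness of the type constants on each endpoint is trivial since each endpoint is a single space.

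The crux is to verify $p_W = p$ for every closed subspace $W \subset W_p$ containing the subsequence $(z_{i(n)})$. The inequality $p_W \geq p$ follows from \hyperref[pro:2.2]{Proposition 2.2}, which gives that $W_p$ itself has type $p$, a property inherited by subspaces. For the reverse inequality $p_W \leq p$, we exploit the block-$\ell_p$ structure $X_p = \big(\bigoplus_n \bigoplus_{A \in \Delta_n} \ell_2(A)\big)_p$: condition (2) makes the $\Delta$-supports of the $z_{i(n)}$ pairwise disjoint, so for any scalars $(a_n)$
\[
\Bigg\|\sum_n a_n z_{i(n)}\Bigg\|_{X_p}^p = \sum_n |a_n|^p \|z_{i(n)}\|_{X_p}^p,
\]
while a direct computation with the six-element support of each $z_i$ gives $6^{1/2} \leq \|z_{i(n)}\|_{X_p} \leq 6^{1/p}$, so $(z_{i(n)})$ is equivalent in $X_p$ to the canonical basis of $\ell_p$. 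Interpolating the isometric inclusions $Y_j \subset X_j$ yields a continuous inclusion $W_p \hookrightarrow X_p$ with $\|\cdot\|_{X_p} \leq \|\cdot\|_{W_p}$, and the estimate $\|z_{i(n)}\|_{W_p} \leq 6^{1-1/p}\|z_{i(n)}\|_{Y_p}$ from the proof of \hyperref[pro:4.3]{Proposition 4.3} (using that the $\Delta$-support of each $z_{i(n)}$ has size at most $6$ by condition (1)) shows that $(z_{i(n)})$ is uniformly bounded in $W_p$. Combining these, for any signs $\epsilon_n \in \{\pm 1\}$,
\[
\mathbb{E}\Bigg\|\sum_{n=1}^N \epsilon_n z_{i(n)}\Bigg\|_{W_p} \geq \mathbb{E}\Bigg\|\sum_{n=1}^N \epsilon_n z_{i(n)}\Bigg\|_{X_p} \gtrsim N^{1/p},
\]
and a hypothetical type $q>p$ of $W$ applied to the $z_{i(n)} \in W$ would bound the left-hand side by $O(N^{1/q})$, a contradiction. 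Hence $p_W=p$, and \hyperref[thm:2.14]{Theorem 2.14}(a) delivers that the twisted sum induced by $W$ is nontrivial, which is precisely the statement that $\Omega_p$ is nontrivial on $W$.

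The main obstacle is the equality $p_W = p$: the upper bound on $p_W$ is soft (interpolation of types plus inheritance), but the matching lower bound depends on the combinatorial input of conditions (1) and (2), which together force the selected subsequence $(z_{i(n)})$ to be $\ell_p$-equivalent in $X_p$ with uniformly bounded norms. Once this is established, the continuous inclusion $W_p \hookrightarrow X_p$ transfers the lower bound to $W_p$ and hence to any subspace $W$ containing $(z_{i(n)})$, and Theorem 2.14(a) does the rest.
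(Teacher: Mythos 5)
Your proof is correct and follows the same overall strategy as the paper: reduce to Theorem~2.14(a) applied to the pair $(Y_2, Y_1)$, the crux being the equality $p_W = p$. You handle $p_W \geq p$ exactly as the paper does (Proposition~2.2 plus inheritance of type by subspaces). The one genuine difference is in the upper bound $p_W \leq p$. The paper constructs an explicit interpolating function $f(z) = \sum |t_n|^{p/p(z)}(t_n/|t_n|)\, z_{i(n)}$, where $1/p(z) = (1-z)/2 + z$, to show $\|x\|_{W_p} \leq K\|x\|_{X_p}$ on $[z_{i(n)}]$, and hence that $[z_{i(n)}]$ equipped with the $W_p$-norm is a copy of $\ell_p$; you instead combine only the two one-sided estimates already available --- the uniform bound $\sup_n \|z_{i(n)}\|_{W_p} < \infty$ from the constant-function trick of Proposition~4.3, and the contractive inclusion $W_p \hookrightarrow X_p$ giving $\mathbb{E}\bigl\|\sum_{n\le N}\epsilon_n z_{i(n)}\bigr\|_{W_p} \gtrsim N^{1/p}$ via the disjoint $\Delta$-supports forced by conditions (1)--(2) --- and derive a contradiction directly from a hypothetical type $q>p$ of $W$. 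Both routes are valid and rest on the same combinatorial input; the paper's argument yields the slightly stronger structural fact that $[z_{i(n)}]$ is an $\ell_p$-copy inside $W_p$, while yours is lighter, avoiding the holomorphic function construction altogether. You also make explicit the endpoint checks ($p_{Y_2}=2$ since $Y_2$ is Hilbert, $p_{Y_1}=1$ since an infinite-dimensional closed subspace of $\ell_1$ contains a copy of $\ell_1$), which the paper leaves implicit.
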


Notice that we can take $i(n) = 2^{2n-1}$ and $j(n) = 2n$, $n \geq 1$. In particular, \ref{eq:4.3} is nontrivial.
\begin{proof}
Let $W$ be a subspace of $W_p$ like in the enunciate. Notice first that for every $x \in W_p$ we have
\[
\|x\|_{X_p} \leq \|x\|_{W_p}
\]

Let $x \in W$ be a finite sum
\[
x = \sum t_n z_{i(n)}
\]

By considering all the possible contributions of $z_{i(n)}$ to the norm of $x$, we see that, for each $1 \leq p \leq 2$, there is a constant $K_p \geq 1$ independent of $x$ such that
\[
\sum \left|t_n\right|^p \leq \|x\|_{X_p}^p \leq K_p \sum \left|t_n\right|^p
\]

Then, if $\|x\|_{X_p} = 1$, taking the function
\[
f(z) = \sum \left|t_n\right|^{\frac{p}{p(z)}} \frac{t_n}{\left|t_n\right|} z_{i(n)}
\]

where $\frac{1}{p(z)} = \frac{1-z}{2} + z$, we have $f(\theta) = x$ and
\[
\|f(iz)\|_{Y_2} \leq K_2^{\frac{1}{2}} (\sum \left|t_n\right|^p)^{\frac{1}{2}} \leq K_2^{\frac{1}{2}}
\]

and
\[
\|f(1 + iz)\|_{Y_1} \leq K_1 \sum \left|t_n\right|^p \leq K_1
\]

Therefore, $\|x\|_{W_p} \leq K = \max\{K_2^{\frac{1}{2}}, K_1\}$. So, for all $x \in [z_{i(n)}]$, $\|x\|_{X_p} \leq \|x\|_{W_p} \leq K \|x\|_{X_p}$, and we have that the closed subspace generated by $(z_{i(n)})_n$ is isomorphic to a subspace of $X_p$. Combining this with \hyperref[pro:2.2]{Proposition 2.2}, we get that $\frac{1}{p_{W}} = \frac{1 - \theta}{p_{Y_2}} + \frac{\theta}{p_{Y_1}}$, and \hyperref[thm:2.14]{Theorem 2.14} gives the result.
\end{proof}

As a matter of fact, we also have:
\begin{props}\label{pro:4.5}
For $1 < p < 2$, the space $dW_p$ does not have the CAP.
\end{props}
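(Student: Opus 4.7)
The plan is to run the Szankowski-style argument used for $W_p$ in Proposition 4.3 inside $dW_p$. The isometric embedding $\iota\colon W_p\to dW_p$, $\iota(x)=(x,0)$, makes Szankowski's test vectors $(z_i)$, the auxiliary sums $w_j=\sum_i \theta_{j,i}z_i$ used in his averaging argument, and (via Hahn-Banach) the dual functionals $w_j^{\ast}$ available in $dW_p$ and $dW_p^{\ast}$ with controlled norms. The crucial norm estimates of Proposition 4.3 remain in force: each $w_j$ has $\Delta$-support of bounded size (at most $9$), so $\|(w_j,0)\|_{dW_p}=\|w_j\|_{W_p}$ is controlled by a fixed constant times $\|w_j\|_{Y_p}$, independently of $j$.

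Assume $dW_p$ has the CAP; for each $\epsilon>0$ and finite $F\subset\mathbb{N}$, choose a compact $S\colon dW_p\to dW_p$ with $\|S(z_i,0)-(z_i,0)\|_{dW_p}<\epsilon$ for $i\in F$. The first-coordinate projection $\pi_1\colon dW_p\to Y_2$ is bounded (because $\Omega$ is quasi-linear into the containing space $Y_2$ and $W_p\hookrightarrow Y_2$ is continuous), and the quotient $q\colon dW_p\to W_p$ is bounded. Setting $a=\pi_1\circ S\circ\iota\colon W_p\to Y_2$ and $b=q\circ S\circ\iota\colon W_p\to W_p$, both compact linear maps, the approximation hypothesis reads
\[
\|a(z_i)-\Omega b(z_i)-z_i\|_{W_p}+\|b(z_i)\|_{W_p}<\epsilon \qquad (i\in F).
\]
Since $\|\Omega b(z_i)\|_{Y_2}$ is bounded by a constant times $\|b(z_i)\|_{W_p}$, this gives $a(z_i)\approx z_i$ in $Y_2$-norm on $F$. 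Evaluating each Szankowski functional $w_j^{\ast}$ against the finite linear combinations $\sum_i\theta_{j,i}(a(z_i)-z_i)$ and exploiting the bounded $\Delta$-support of $w_j$ to translate the $Y_2$-information back into $W_p$- and $Y_p$-information with constants independent of $j$, I expect to reproduce Szankowski's lower bound for $\sum_j |w_j^{\ast}(\,\cdot\,)|$ and contradict the $\epsilon$-upper bound coming from the approximation hypothesis, for sufficiently large $F$ and small $\epsilon$.

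The main obstacle is this last transfer: Szankowski's inequality was originally phrased for operators $Y_p\to Y_p$, whereas our linear witness $a$ takes values in the strictly larger space $Y_2$ and is perturbed by the genuinely quasi-linear correction $\Omega b$. Both issues should be absorbed by the same bounded-$\Delta$-support bookkeeping that drove Proposition 4.3: on the finite-dimensional subspaces spanned by the $w_j$'s and the combinations appearing in Szankowski's averaging, the norms $\|\cdot\|_{W_p}$, $\|\cdot\|_{Y_p}$ and $\|\cdot\|_{Y_2}$ are uniformly equivalent, with constants depending only on the maximal $\Delta$-support (bounded by $9$ throughout Szankowski's proof, as already recorded in Proposition 4.3), so the combinatorial heart of the argument transfers without deterioration of constants in $F$.
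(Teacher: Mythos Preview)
Your plan diverges from the paper's at the first step, and the divergence is exactly where your acknowledged obstacle sits. You place the $z_i$'s in $dW_p$ via the \emph{subspace} embedding $\iota(x)=(x,0)$ and then, assuming CAP, try to push a compact approximation back down to $W_p$ (or $Y_2$). The paper instead passes to the equivalent realization $dW_p \simeq \{(f(\theta),f'(\theta)):f\in\mathcal F\}$, in which the \emph{quotient} map is $(x,y)\mapsto x$, and lifts the entire Szankowski data through this quotient: $\tilde z_i=(z_i,0)$ is the class of the constant function $f\equiv z_i$ (a section of the quotient, not an element of the embedded copy), and $\tilde z_i^{\,*}(x,y)=z_i^{*}(x)$ is the pullback of $z_i^{*}$ along the quotient. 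The bounded $\Delta$-support of the original test vectors then gives $\|\tilde z_i\|\le 6^{1-1/p}\|z_i\|_{Y_p}$ and $\bigl\|\sum_j\theta_j(y_j,0)\bigr\|\le 9^{1-1/p}\bigl\|\sum_j\theta_j y_j\bigr\|_{Y_p}$ directly (take constant functions in $\mathcal F$), while $\|\tilde z_i^{\,*}\|\le\|z_i^{*}\|_{W_p^{*}}$ and $\tilde z_i^{\,*}\xrightarrow{w^{*}}0$ because $\{(w,y):w\in[z_i]\}$ is dense. Szankowski's criterion then applies verbatim inside $dW_p$: no projection, no quasi-linear correction, no Hahn--Banach.

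In your route the gap is concrete. The ``bounded $\Delta$-support'' equivalence of the $W_p$, $Y_p$, $Y_2$ norms holds for the \emph{inputs} $z_i$ and $w_j=\sum_i\theta_{j,i}z_i$, but not for the \emph{outputs} $a(z_i)$: a compact operator can spread support arbitrarily, so the statement $a(z_i)\approx z_i$ in $Y_2$ carries no $W_p$-information and contradicts nothing (finite-rank $Y_2$-projections restricted to $W_p$ already achieve it on any finite set). The $W_p$-information you do have is on $a-\Omega b$, which is nonlinear, and Szankowski's Cauchy estimate $|\alpha_n(T)-\alpha_{n+1}(T)|\le\beta_n\|T\|$ uses linearity of $T$ in an essential way. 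Lifting through the quotient rather than embedding via the subspace sidesteps both issues in one stroke; that is what the paper does.
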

\begin{proof}To see that, we first find a Banach norm on $dW_p$. The twisted sum $dX_{\theta}$ is equivalent to the twisted sum
  \begin{equation*}
  \xymatrix{ 0 \ar[r] & X_{\theta} \ar[r] & \mathcal{F}/(\ker(\delta_{\theta})\cap\ker(\delta'_{\theta}))
    \ar[r] & X_{\theta}\ar[r]  &0}
  \end{equation*}
  
\noindent where we identify $\mathcal{F}/(\ker(\delta_{\theta})\cap\ker(\delta'_{\theta}))$ with
\[
\{(f(\theta), f'(\theta)) : f \in \mathcal{F}\}
\]

\noindent with inclusion $x \mapsto (0, x)$ and quotient $(x, y) \mapsto x$ (\cite{Kalton20031131}, for example).

Let $\tilde{z_i} = (z_i, 0)$. Again
\[
\|\tilde{z_i}\| \leq 6^{1 - \frac{1}{p}}\|z_i\|_{Y_p}
\]

Define $\tilde{z^*_i}$ on $\mathcal{F}/(\ker(\delta_{\theta})\cap\ker(\delta'_{\theta}))$ by
\[
\tilde{z^*_i}(x, y) = z^*_i(x)
\]

Then
\begin{equation*}
\left|\tilde{z^*_i}(x, y)\right| \leq \|z^*_i\|_{W^*_p}\|(x, y)\|
\end{equation*}

The set of points $(w, y)$ with $w \in [z_i]$ is dense in $\mathcal{F}/(\ker(\delta_{\theta})\cap\ker(\delta'_{\theta}))$, since $[z_i]$ is dense in $W_p$, and therefore $\tilde{z^*_i}$ is $w^*-$convergent to $0$.

Instead of finding a bound for $\|\sum\theta_j y_j\|$, we have to bound $\|\sum\theta_j (y_j, 0)\|$. But by taking constant functions
\[
\|\sum\theta_j (y_j, 0)\| \leq 9^{1 - \frac{1}{p}} \|\sum\theta_j y_j\|_{Y_p}
\]

So replacing $z_i$ by $\tilde{z_i}$ and $z_i^*$ by $\tilde{z_i^*}$, we can reproduce the proof that $Y_p$ does not have the CAP to $\mathcal{F}/(\ker(\delta_{\theta})\cap\ker(\delta'_{\theta}))$.
\end{proof}

For more on twisted sums involving the approximation property, see for example \cite{Castillo2013}, \cite{Chen2013}, \cite{Godefroy1989} and \cite{szankowski2009}.

\section{Final Remarks}

One may wonder if the condition on the type of $X_{z_0}$ of \hyperref[thm:2.14]{Theorem 2.14} can be removed. Simple examples show that this is not the case: consider the case $X_0 = \ell_1 \oplus \ell_2$, $X_1 = 0 \oplus \ell_2$. Then $p_{X_0} = 1$, $p_{X_1} = 2$, and for every $\theta \in (0, 1)$ we have $X_{\theta} = \ell_2$ and the trivial extension. It is not clear what happens if we also suppose that $\Delta(\overline{X})$ is dense in both $X_0$ and $X_1$.

Finally, examples \ref{sec:4.1} and \ref{sec:4.2} raise the following

\begin{question}
If $dX_{\theta}$ is nontrivial for every $\theta \in (0, 1)$, $\theta \neq \frac{1}{2}$, is it true that $dX_{\frac{1}{2}}$ is nontrivial?
\end{question}

\section*{Acknowledgements}
This paper is part of my Phd. research at Universidade de S\~ao Paulo under the supervision of Valentin Ferenczi, whom I would like to thank for all his invaluable help. I also would like to thank Jes\'us Castillo for all his kind remarks regarding this work, and Jo\~ao Fernando da Cunha and Wilson Cuellar for their help with some of the background.

%    Bibliography
\bibliographystyle{amsplain}
\bibliography{refs}

\end{document}